\newtheorem{theorem}{Theorem}
\newtheorem{proposition}[theorem]{Proposition}
\newtheorem{corollary}[theorem]{Corollary}
\newtheorem{lemma}[theorem]{Lemma}
\theoremstyle{remark}
\newtheorem{example}[theorem]{Example}
\newtheorem{remark}[theorem]{Remark}
\def\CaN{\mathcal{N}}
\def\CaF{\mathcal{F}}
\def\CaC{\mathcal{C}}
\def\CaH{\mathcal{H}}
\def\N{\mathbb{N}}
\def\Q{\mathbb{Q}}
\def\FG{\mathrm{FG}}
\def\PF{\mathrm{PF}}
\def\SG{\mathrm{SG}}
\def\CaN{\mathcal{N}}
\def\CaH{\mathcal{H}}
\def\CaC{\mathcal{C}}
\def\N{\mathbb{N}}
\def\Q{\mathbb{Q}}
\title{Some properties of affine $\mathcal C$-semigroups}
\date{}
\author{
J. I. Garc\'{\i}a-Garc\'{\i}a
\\
D. Marín-Aragón
\\
A. Sánchez-Loureiro
\\
A. Vigneron-Tenorio
}
\begin{document}
\maketitle
\begin{abstract}
Numerical semigroups have been extensively studied throughout the literature, and many of their invariants have been characterized. In this work, we generalize some of the most important results about symmetry, pseudo-symmetry, or fundamental gaps, to affine $\CaC$-semigroups. In addition, we give algorithms to compute the tree of irreducible $\CaC$-semigroups and $\CaC$-semigroups with a given Frobenius vector.
\end{abstract}

\smallskip {\small \emph{Keywords:} $\CaC$-semigroup, Frobenius element, fundamental gap, irreducible semigroup, pseudo-Frobenius element, pseudo-symmetric semigroup, special gap, symmetric semigroup.}

\smallskip {\small \emph{2020 Mathematics Subject Classification:} 20M14 (Primary), 68R05 (Secondary).}

\section*{Introduction}

A $\CaC$-semigroup is a non-empty subset of $\N^p$ (for some non-zero natural number $p$), containing $0$ and closed under addition, such that $\CaC\setminus S$ is finite; $\CaC\subset \N^p$ denotes the integer cone generated by $S$. These semigroups are the natural generalization to higher dimensions of the numerical semigroups. Moreover, some objects related to numerical semigroups can be generalized to $\CaC$-semigroups. For example, the elements in $\CaC\setminus S$ are called {\em gaps} of $S$, and the cardinality of its gap set is called {\em genus} of $S$. We denote this set by $\CaH(S)$, and its cardinality, by $g(S)$.

There are other objects whose generalization needs to consider a total order on $\N^p$. For example, the Frobenius number of a numerical semigroup is the maximum integer that is not in it. Still, its generalization over the $\CaC$-semigroups is not unique if we do not fix a total order. So, fixed $\prec$ a total order on $\N^p$, the Frobenius element of $S$ is $\max_{\prec} (\CaC\setminus S)$.

Even though $\CaC$-semigroups frequently appear in semigroup theory, it is not until the publication of \cite{GenSemNp} that they have become an object of study in their own right. This paper defines {\em generalized numerical semigroups} as the $\CaC$-semigroups where the cone $\CaC$ is $\N^p$. Since 2016, several works have been devoted to study different properties of $\CaC$-semigroups in general and generalized numerical semigroups in particular. For example, in \cite{CFU}, the authors show that any $\N^p$-semigroup has a unique minimal system of generators and provide an algorithm to compute its set of gaps from a set of generators of the $\N^p$-semigroup. In \cite{Csemigroup}, an extension of Wilf's conjecture for numerical semigroups is given to $\CaC$-semigroups, and in \cite{Wilf_GNS}, another one is introduced for $\N^p$-semigroups. This paper also studies the irreducibleness of the $\N^p$-semigroups. More recent papers about $\N^p$-semigroups are \cite{corner}, \cite{algoritmos_GNS}, \cite{irreducible_GNS}, \cite{Almost_GNS}, and \cite{SL}. For any $\CaC$-semigroup, in \cite{check_C_semigr}, the authors mainly provide an algorithm to check if an affine semigroup given by a generating set is a $\CaC$-semigroup and to compute its gap set.

The main goal of this work is to generalize several results of numerical semigroups to $\CaC$-semigroups. A $\CaC$-semigroup is $\CaC$-reducible (simplifying reducible) when it can be expressed as an intersection of two $\CaC$-semigroups containing it properly (see \cite{resolucion_maxima}); $S$ is $\CaC$-irreducible (simplifying irreducible) in another case. In this work, we also characterize irreducible $\CaC$-semigroups from their genus and from their generalized Frobenius numbers. 
We also study when a subset of a cone $\CaC$ is the gap set of a $\CaC$-semigroup or determines it.
These results are complemented by some algorithms for checking the corresponding properties.

Moreover, some algorithms for computing some objects related to $\CaC$-semigroups are provided. In particular, it is defined a tree whose vertex set is the set of all irreducible $\CaC$-semigroups with a fixed Frobenius vector. An algorithm to compute this tree is also introduced. For any integer cone $\CaC$ and any non-null element $\mathbf{f}\in \CaC$, we give a procedure to obtain all $\CaC$-semigroups with Frobenius element equal to $\mathbf{f}$.

The results of this work are illustrated with several examples. For this purpose, we have implemented all the algorithms shown in this work in our library \emph{CommutativeMonoids} dedicated to the study of numerical and affine semigroups (see \cite{PROGRAMA}) developed by the authors in Python \cite{python} and C++. A notebook containing all the examples of this work can be found at the following link \url{https://github.com/D-marina/CommutativeMonoids/blob/master/CClassCSemigroups/SomePropertiesCSemigroup.ipynb}.

The content of this work is organized as follows: Section \ref{sec_prelimiraries} is devoted to provide the reader with the necessary background for the correct understanding of the work. In Section \ref{C_irreducible}, we introduce the concept of symmetric and pseudo-symmetric $\CaC$-semigroups, and some characterizations of these concepts are given. We turn our attention in Section \ref{sec:tree} to the irreducible $\CaC$-semigroups, we prove that we can build a tree with all these semigroups with a fixed Frobenius vector, and we show an algorithm for computing them. Similarly, an algorithm for computing all the $\CaC$-semigroups with a fixed Frobenius vector is given in Section \ref{sec:CsemigroupsGivenFrobenius}. Finally, in Section \ref{fundamental_gaps}, we study the fundamental gaps of a $\CaC$-semigroup, and for any set $X\subset\CaC$, we give conditions to determine if $\CaC\setminus X$ is a $\CaC$-semigroup.

\section{Preliminaries}\label{sec_prelimiraries} 

In this work, $\Q$, $\Q_\ge$, and $\N$ denote the sets of rational numbers, non-negative rational numbers, and non-negative integer numbers, respectively. For any $n\in \N$, $[n]$ denotes the set $\{1,\ldots , n\}$.

A non-degenerated rational cone in $\Q_\ge^p$ is the convex hull of finitely many half lines in $\Q_\ge^p$ emanating from the origin. These cones can also be determined from their supporting hyperplanes. We consider that the integer points of a rational cone form an integer cone in $\N^p$. It is well known that any integer cone $\CaC\subset \N^p$ is finitely generated if and only if a rational point exists in each of its extremal rays. Moreover, any subsemigroup of $\CaC$ is finitely generated if and only if there exists an element in the subsemigroup in each extremal ray of $\CaC$. Both results are proved in \cite[Chapter 2]{Bruns}, where an in-depth study on cones can also be found. We assume that any integer cone considered in this work is finitely generated.

Throughout this work, we use some particular gaps in $\CaH(S)$ whose definitions are the same for numerical semigroups \cite{libro_rosales}:
\begin{itemize}
\item $\mathbf{x}\in \CaH(S)$ is a \emph{fundamental gap} if $2\mathbf{x},3\mathbf{x}\in S$. The set of these  elements is denoted by $\FG(S)$.
\item $\mathbf{x}\in \CaH(S)$ is a \emph{pseudo-Frobenius element} if $\mathbf{x}+(S\setminus\{0\})\subset S$, the set of pseudo-Frobenius elements of $S$ is denoted by $\PF(S)$, and its cardinality is known as the type of $S$, $t(S)$.
\item $\mathbf{x}\in \CaH(S)$ is a \emph{special gap} of $S$ if $\mathbf{x}\in\PF(S)$ and $2\mathbf{x}\in S$. We denote by $\SG(S)$ the set of special gaps of $S$. 
\end{itemize}

In this work, we consider different orders on some sets. On a non-empty set $L\subset \N^p$ and $\mathbf{x},\mathbf{y}\in \N^p$, consider the partial order $\mathbf{x}\le_L \mathbf{y}$ if $\mathbf{y}- \mathbf{x}\in L$. Besides, we also fix $\preceq$ a total order on $\N^p$ determined by a monomial order. A monomial order is a total order on the set of all (monic) monomials in a given polynomial ring (see \cite{CoxLO}). From the properties of a monomial order, the (induced) total order $\preceq$ on $\N^p$ satisfies:
\begin{itemize}
\item if $ \mathbf{a} \preceq \mathbf{b}$ and $\mathbf{c}\in \N^p$, then $\mathbf{a}+\mathbf{c}\preceq \mathbf{b}+\mathbf{c}$,
\item if $ \mathbf{c}\in\N^p$, then $0\preceq \mathbf{c}$.
\end{itemize}
Every monomial order can be represented via matrices. For a nonsingular integer ($p\times p$)-matrix $M$ with rows $M_1,\ldots ,M_p,$ the $M$-ordering $\prec$ is defined by $\mathbf{a}\prec \mathbf{b}$ if and only if there exists an integer $i$ belonging to $[p-1],$ such that $M_1\mathbf{a}=M_1\mathbf{b},\ldots ,M_i\mathbf{a}=M_i\mathbf{b}$ and $M_{i+1}\mathbf{a}<M_{i+1}\mathbf{b}.$ 

From the fixed total order on $\N^p$, the Frobenius vector of $S$, $F(S)$, is the maximal element in $\CaH(S)$ respect to $\preceq$, and we set $n(S)$ as the cardinality of $\CaN(S)=\{\mathbf{x}\in S\mid \mathbf{x}\preceq F(S) \}$.

The following lemma generalizes to $\CaC$-semigroups Proposition 2.26 in \cite{libro_rosales}.

\begin{lemma}
Let $S$ be a $\CaC$-semigroup. Then, $g(S)\le t(S)n(S)$.
\end{lemma}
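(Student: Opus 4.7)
The plan is to mimic the classical argument for numerical semigroups by constructing an injective map
\[
\phi\colon \CaH(S)\longrightarrow \PF(S)\times \CaN(S),
\]
from which $g(S)=|\CaH(S)|\le |\PF(S)|\cdot |\CaN(S)|=t(S)n(S)$ follows immediately.

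For each gap $\mathbf{h}\in \CaH(S)$, I would consider the set
$B_{\mathbf{h}}=\{\mathbf{s}\in S\mid \mathbf{h}+\mathbf{s}\in \CaH(S)\}$, which is non-empty because $0\in B_{\mathbf{h}}$, and finite because $\mathbf{h}+B_{\mathbf{h}}\subset \CaH(S)$ and $\CaH(S)$ is finite by the definition of a $\CaC$-semigroup. Since the finite set $\mathbf{h}+B_{\mathbf{h}}\subset \N^p$ has a $\preceq$-maximum, I would pick $\mathbf{s}_{\mathbf{h}}\in B_{\mathbf{h}}$ such that $\mathbf{f}_{\mathbf{h}}:=\mathbf{h}+\mathbf{s}_{\mathbf{h}}$ is that maximum, and set $\phi(\mathbf{h})=(\mathbf{f}_{\mathbf{h}},\mathbf{s}_{\mathbf{h}})$.

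Next I would verify the three required facts. For $\mathbf{f}_{\mathbf{h}}\in \PF(S)$, take any $\mathbf{s}\in S\setminus\{0\}$; then $\mathbf{f}_{\mathbf{h}}+\mathbf{s}=\mathbf{h}+(\mathbf{s}_{\mathbf{h}}+\mathbf{s})$ cannot lie in $\CaH(S)$, since otherwise $\mathbf{s}_{\mathbf{h}}+\mathbf{s}\in B_{\mathbf{h}}$ and, using $0\prec \mathbf{s}$ together with the additive monotonicity of $\preceq$, the element $\mathbf{f}_{\mathbf{h}}+\mathbf{s}$ would strictly exceed $\mathbf{f}_{\mathbf{h}}$ in $\mathbf{h}+B_{\mathbf{h}}$, contradicting the maximality. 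For $\mathbf{s}_{\mathbf{h}}\in \CaN(S)$, I use $\mathbf{f}_{\mathbf{h}}\in \CaH(S)$ to get $\mathbf{f}_{\mathbf{h}}\preceq F(S)$, and combine $0\preceq \mathbf{h}$ with monotonicity to obtain $\mathbf{s}_{\mathbf{h}}\preceq \mathbf{s}_{\mathbf{h}}+\mathbf{h}=\mathbf{f}_{\mathbf{h}}\preceq F(S)$. Injectivity of $\phi$ is automatic from $\mathbf{h}=\mathbf{f}_{\mathbf{h}}-\mathbf{s}_{\mathbf{h}}$.

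The main obstacle is not conceptual but concerns transferring inequalities from $\N$ to $\N^p$: one needs that $\mathbf{a}\preceq \mathbf{b}$ with $\mathbf{a}\ne \mathbf{b}$ and $\mathbf{c}\in \N^p$ forces $\mathbf{a}+\mathbf{c}\prec \mathbf{b}+\mathbf{c}$ (which follows from cancellation in $\N^p$ combined with the stated monotonicity property of $\preceq$) and that $\mathbf{c}\in \N^p\setminus\{0\}$ forces $0\prec \mathbf{c}$ (by trichotomy, since $0\preceq \mathbf{c}$ is given). Once these strict versions of the monomial-order axioms are in place, the maximality argument that identifies $\mathbf{f}_{\mathbf{h}}$ as a pseudo-Frobenius element goes through verbatim, and the bound follows.
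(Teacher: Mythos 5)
Your proof is correct and takes essentially the same approach as the paper: both construct an injective map $\CaH(S)\to \PF(S)\times \CaN(S)$ sending a gap $\mathbf{h}$ to a pair $(\mathbf{f},\mathbf{f}-\mathbf{h})$ with $\mathbf{f}\in\PF(S)$, which immediately gives $g(S)\le t(S)n(S)$. The only difference is the canonical choice of $\mathbf{f}$ --- the paper takes the $\preceq$-minimal pseudo-Frobenius element $\mathbf{f}$ with $\mathbf{f}-\mathbf{h}\in S$, while you take the $\preceq$-maximal gap in $\mathbf{h}+S$ and verify it is pseudo-Frobenius --- so your version additionally spells out the existence argument that the paper only imports from the numerical-semigroup case.
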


\begin{proof}
Just as it occurs for numerical semigroups,
for any $\mathbf{x}\in \CaH(S)$, there exist $(\mathbf{f},\mathbf{s})\in \PF(S)\times S$ such that $\mathbf{f}=\mathbf{x}+\mathbf{s}$, and $\mathbf{f}_\mathbf{x}=\min _{\preceq}\{ \mathbf{f}\in \PF(S)\mid \mathbf{f}-\mathbf{x}\in S \}$. Hence, the map $\CaH(S)\to \PF(S)\times \CaN(S)$, defined by $x\mapsto (\mathbf{f}_\mathbf{x},\mathbf{f}_\mathbf{x}-\mathbf{x})$ is injective, and thus $g(S)\le t(S)n(S)$.
\end{proof}

\section{Symmetric and pseudo-symmetric $\CaC$-semigroups}\label{C_irreducible}

Fix $S\subset \N^p$ a $\CaC$-semigroup with genus $g$. In this section, we characterize the symmetric and pseudo-symmetric $\CaC$-semigroups using their genus. We say that $S$ is $\CaC$-irreducible when $\PF(S)$ is equal to $\{F(S)\}$ or $\{F(S),F(S)/2\}$ (see \cite{resolucion_maxima}). If $\PF(S)=\{F(S)\}$, we say that $S$ is symmetric, and pseudo-symmetric when $\PF(S)=\{F(S),F(S)/2\}$.

For any element $\mathbf{n}$ in $\CaC$, let $I_S(\mathbf{n})$ be the set $\{ \mathbf{s}\in S\mid \mathbf{s}\le _\CaC \mathbf{n}\}$.
\begin{remark}\label{iff}
Note that, for any $\mathbf{s}\in S$, $\mathbf{s}\in I_S(F(S))$ if and only if $F(S)-\mathbf{s}\in \CaH(S)$. Thus, $g\ge \sharp I_S(F(S))$.
\end{remark}

We have the following characterizations of symmetric and pseudo-symmetric $\CaC$-semigroups.

\begin{proposition}\label{symmetric}
Let $S$ be a $\CaC$-semigroup with genus $g$. Then, $S$ is symmetric if and only if $g=\sharp I_S(F(S))$.
\end{proposition}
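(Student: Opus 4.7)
The plan is to exhibit an injection $\phi \colon I_S(F(S)) \to \CaH(S)$ given by $\phi(\mathbf{s}) = F(S) - \mathbf{s}$, and to show that it is surjective precisely when $S$ is symmetric. By Remark \ref{iff} this map is well-defined, and injectivity is immediate since subtraction is cancellative; this re-proves $\sharp I_S(F(S)) \le g$. Hence the equality $g = \sharp I_S(F(S))$ is equivalent to $\phi$ being a bijection, i.e., to the condition that every $\mathbf{h} \in \CaH(S)$ satisfies $F(S) - \mathbf{h} \in S$.

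For the forward direction, assume $S$ is symmetric, so $\PF(S) = \{F(S)\}$. I would first note that $F(S)$ always belongs to $\PF(S)$: for any $\mathbf{s} \in S \setminus \{0\}$ one has $F(S) + \mathbf{s} \in \CaC$ and $F(S) + \mathbf{s} \succ F(S)$, so by maximality of $F(S)$ in $\CaH(S)$ one must have $F(S) + \mathbf{s} \in S$. Now, given an arbitrary $\mathbf{h} \in \CaH(S)$, I would iteratively build a sequence $\mathbf{h} = \mathbf{h}_0 \prec \mathbf{h}_1 \prec \cdots$ in $\CaH(S)$ as follows: as long as $\mathbf{h}_i \notin \PF(S)$, choose $\mathbf{s}_{i+1} \in S \setminus \{0\}$ with $\mathbf{h}_{i+1} := \mathbf{h}_i + \mathbf{s}_{i+1} \in \CaH(S)$. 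This sequence is strictly $\preceq$-increasing, so by finiteness of $\CaH(S)$ it terminates at some $\mathbf{h}_k \in \PF(S) = \{F(S)\}$. Collecting increments, $F(S) - \mathbf{h} = \mathbf{s}_1 + \cdots + \mathbf{s}_k \in S$, which is exactly what surjectivity of $\phi$ requires.

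For the converse, assume $\phi$ is a bijection. Given $\mathbf{f} \in \PF(S) \subset \CaH(S)$, surjectivity yields $\mathbf{s} \in I_S(F(S))$ with $F(S) = \mathbf{f} + \mathbf{s}$. If $\mathbf{s} \ne 0$, then the pseudo-Frobenius property of $\mathbf{f}$ gives $\mathbf{f} + \mathbf{s} \in S$, contradicting $F(S) \in \CaH(S)$; hence $\mathbf{s} = 0$ and $\mathbf{f} = F(S)$, proving $\PF(S) = \{F(S)\}$.

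The main obstacle is the iterative step in the forward direction: one must verify both that each step strictly increases in $\preceq$ (which follows from compatibility of $\preceq$ with addition and $\mathbf{s}_{i+1} \succ 0$) and that the construction can only stall at a pseudo-Frobenius element, so that the assumption $\PF(S) = \{F(S)\}$ forces the sequence to end at $F(S)$ itself. Everything else is bookkeeping around the bijection $\phi$.
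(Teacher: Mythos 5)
Your proof is correct and takes essentially the same route as the paper's: both rest on the injective correspondence $\mathbf{s}\mapsto F(S)-\mathbf{s}$ between $I_S(F(S))$ and $\CaH(S)$, with symmetry equivalent to its surjectivity, and the converse argued by showing no gap other than $F(S)$ can be pseudo-Frobenius. The only difference is one of detail: your ascending-chain construction explicitly justifies that in a symmetric semigroup every gap $\mathbf{x}$ satisfies $F(S)-\mathbf{x}\in S$, a step the paper simply asserts (it is the same standard fact invoked without proof in its Lemma 1).
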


\begin{proof}
Assume that $S$ is symmetric. Thus, $F(S)$ is the unique pseudo-Frobenius element of $S$. Furthermore, for any $\mathbf{x}\in \CaH(S)$, there exists $\mathbf{s}\in S$ such that $\mathbf{x}+\mathbf{s}=F(S)$, that is $\mathbf{s}\in I_S(F(S))$, and then $\sharp I_S(F(S))\ge g$. Since $g\ge \sharp I_S(F(S))$, we conclude that $g=\sharp I_S(F(S))$.

Conversely, note that $I_S(F(S))=\{\mathbf{s}\in S\mid F(S)-\mathbf{s}\in \CaH(S)\}$, and suppose that $g= \sharp I_S(F(S))$. Hence, every $\mathbf{x}\in \CaH(S)\setminus \{F(S)\}$ satisfies  $F(S)-\mathbf{x}\in S$, and then $\mathbf{x}$ is not a pseudo-Frobenius element of $S$.
\end{proof}

\begin{proposition}\label{pseudo-symmetric}
Let $S$ be a $\CaC$-semigroup with genus $g$. Then, $S$ is pseudo-symmetric if and only if $g=1+\sharp I_S(F(S))$ and $F(S)/2\in \N^p$.
\end{proposition}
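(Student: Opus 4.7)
My plan is to mirror Proposition \ref{symmetric} closely, the new feature being that $F(S)/2$ contributes exactly one extra gap outside the bijection of Remark \ref{iff}. Setting $A=\{\mathbf{x}\in\CaH(S)\mid F(S)-\mathbf{x}\in S\}$, Remark \ref{iff} makes $\mathbf{s}\mapsto F(S)-\mathbf{s}$ a bijection $I_S(F(S))\to A$, so $\sharp A=\sharp I_S(F(S))$, and the whole proof reduces to showing that $\CaH(S)\setminus A=\{F(S)/2\}$ under each hypothesis. I would first observe that whenever $F(S)/2\in\N^p$, the cone property forces $F(S)/2\in\CaC$, and $F(S)/2\notin S$ (otherwise $F(S)=2\cdot F(S)/2\in S$); hence $F(S)/2\in\CaH(S)$ and $F(S)-F(S)/2=F(S)/2\notin S$, so $F(S)/2\in\CaH(S)\setminus A$ automatically.

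For the forward direction, assuming $\PF(S)=\{F(S),F(S)/2\}$, I would pick $\mathbf{x}\in\CaH(S)\setminus A$; since $0\in S$ yields $F(S)\in A$, we get $\mathbf{x}\ne F(S)$. If further $\mathbf{x}\ne F(S)/2$, then $\mathbf{x}\notin\PF(S)$, so I iteratively choose $\mathbf{s}_i\in S\setminus\{0\}$ with $\mathbf{x}_k+\mathbf{s}_{k+1}\in\CaH(S)$, producing a strictly $\prec$-increasing chain of gaps. Finiteness of $\CaH(S)$ forces termination at some $\mathbf{x}_K\in\PF(S)$. Writing $\mathbf{t}=\sum\mathbf{s}_i\in S\setminus\{0\}$, the case $\mathbf{x}_K=F(S)$ gives $F(S)-\mathbf{x}=\mathbf{t}\in S$, while the case $\mathbf{x}_K=F(S)/2$ gives $F(S)-\mathbf{x}=F(S)/2+\mathbf{t}\in S$ by the pseudo-Frobenius closure at $F(S)/2$. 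Either outcome contradicts $\mathbf{x}\notin A$, so $\mathbf{x}=F(S)/2$ and $g=\sharp A+1$.

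For the converse, assuming $g=1+\sharp I_S(F(S))$ and $F(S)/2\in\N^p$, I have $\sharp(\CaH(S)\setminus A)=1$, and the setup paragraph identifies this single element as $F(S)/2$. Every gap $\mathbf{x}\ne F(S),F(S)/2$ thus lies in $A$, so $F(S)-\mathbf{x}\in S\setminus\{0\}$ witnesses $\mathbf{x}\notin\PF(S)$, giving $\PF(S)\subseteq\{F(S),F(S)/2\}$. Since $F(S)\in\PF(S)$ always, equality must hold: the alternative $\PF(S)=\{F(S)\}$ would make $S$ symmetric and force $g=\sharp I_S(F(S))$ via Proposition \ref{symmetric}, contradicting the hypothesis.

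The main obstacle will be the iteration step ruling out extraneous elements of $\CaH(S)\setminus A$. It relies on the monomial-order property that strict addition increases elements -- guaranteeing the chain never cycles and thus terminates in the finite set $\CaH(S)$ -- together with the crucial observation that termination at $F(S)/2$ still produces $F(S)-\mathbf{x}\in S$ via $F(S)/2+(S\setminus\{0\})\subset S$. Once this is in hand, the remainder is routine bookkeeping adapted from the symmetric case.
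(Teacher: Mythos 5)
Your proof is correct and follows essentially the same route as the paper's: both directions rest on the correspondence $\mathbf{x}\mapsto F(S)-\mathbf{x}$ between $I_S(F(S))$ and the gaps reachable from $F(S)$, with the pseudo-Frobenius closure $F(S)/2+(S\setminus\{0\})\subset S$ handling gaps that land on $F(S)/2$, and the converse pinning down the unique extra gap as $F(S)/2$. The only differences are that you make explicit two steps the paper glosses over --- the $\prec$-increasing chain showing every non-pseudo-Frobenius gap climbs to an element of $\PF(S)$, and the appeal to Proposition \ref{symmetric} to rule out $\PF(S)=\{F(S)\}$ in the converse --- which strengthens rather than changes the argument.
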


\begin{proof}
Assume that $S$ is pseudo-symmetric, thus $\PF(S)=\{F(S),F(S)/2\}$, and $g>\sharp I_S(F(S))$. For all $\mathbf{x}\in \CaH(S)\setminus\{F(S)/2\}$, there exists some $\mathbf{s}\in S$ such that $\mathbf{x}+\mathbf{s}= F(S)$, or $\mathbf{x}+\mathbf{s}= F(S)/2$. If the first option is satisfied, $\mathbf{s}\in I_S(F(S))$. In other case, $\mathbf{x}+\mathbf{s}+F(S)/2= F(S)$ and then $\mathbf{s}+F(S)/2$ also belongs to $I_S(F(S))$. Besides, $F(S)/2+\mathbf{s}\neq F(S)$ for every $\mathbf{s}\in S$. Hence, $\sharp I_S(F(S))\ge g-1$.

Conversely, suppose that $g= \sharp I_S(F(S))+1$ and $F(S)/2\in \N^p$. Hence, there exists only one $\mathbf{x}\in \CaH(S)\setminus \{F(S)\}$ with $\mathbf{x}+\mathbf{s}\neq F(S)$ for all $\mathbf{s}\in S$. Hence, $\PF(S)=\{F(S),\mathbf{x}\}$. If $\mathbf{x}\neq F(S)/2$, then there is $\mathbf{s}\in S$ such that $F(S)/2+\mathbf{s}=F(S)$, and $F(S)/2\in S$, but it is not possible. So, $\mathbf{x}= F(S)/2$.
\end{proof}

Consider the Apéry set of a $\CaC$-semigroup $S$ relative to $\mathbf{b} \in S \setminus \{0\}$ as $\mathrm{Ap}(S,\mathbf{b}) = \{\mathbf{a} \in S \mid \mathbf{a}-\mathbf{b} \in \CaH(S)\}$. The following proposition shows the relationship between the pseudo-Frobenius elements of $S$ and its Apéry set.

\begin{proposition}\cite[Proposition 16]{resolucion_maxima}\label{Prop ApPF}
Let $S$ be a $\CaC$-semigroup and $\mathbf{b} \in S \setminus \{0\}$. Then,
\begin{equation*}\label{ecu1} \PF(S) = \{\mathbf{a} - \mathbf{b} \mid \mathbf{a} \in \mathrm{maximals}_{\le_S} \mathrm{Ap}(S,\mathbf{b}) \}.\end{equation*}
\end{proposition}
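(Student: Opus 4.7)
The plan is to prove the equality by establishing both set-inclusions, exploiting only the defining properties of $\PF(S)$ and $\mathrm{Ap}(S,\mathbf{b})$, and the translation symmetry of the partial order $\le_S$. The argument is essentially the same as in the numerical semigroup case (cf.\ \cite[Proposition 2.20]{libro_rosales}), because the ambient dimension plays no role once the order $\le_S$ is available.

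For the inclusion $\supseteq$, I would take $\mathbf{a}\in \mathrm{maximals}_{\le_S}\mathrm{Ap}(S,\mathbf{b})$ and check that $\mathbf{x}:=\mathbf{a}-\mathbf{b}$ lies in $\PF(S)$. Membership in $\CaH(S)$ is immediate from the definition of $\mathrm{Ap}(S,\mathbf{b})$. To verify $\mathbf{x}+(S\setminus\{0\})\subset S$, I would argue by contradiction: if some $\mathbf{s}\in S\setminus\{0\}$ satisfied $\mathbf{x}+\mathbf{s}\notin S$, then $(\mathbf{a}+\mathbf{s})-\mathbf{b}=\mathbf{x}+\mathbf{s}\in\CaH(S)$, so $\mathbf{a}+\mathbf{s}\in \mathrm{Ap}(S,\mathbf{b})$, and $\mathbf{a}\le_S \mathbf{a}+\mathbf{s}$ with $\mathbf{a}\neq \mathbf{a}+\mathbf{s}$, contradicting maximality of $\mathbf{a}$.

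For $\subseteq$, I would take $\mathbf{x}\in \PF(S)$ and show that $\mathbf{a}:=\mathbf{x}+\mathbf{b}$ is a maximal element of $\mathrm{Ap}(S,\mathbf{b})$ with respect to $\le_S$; the conclusion then follows from $\mathbf{x}=\mathbf{a}-\mathbf{b}$. Since $\mathbf{b}\in S\setminus\{0\}$ and $\mathbf{x}\in \PF(S)$, the defining property of pseudo-Frobenius elements yields $\mathbf{a}\in S$, and $\mathbf{a}-\mathbf{b}=\mathbf{x}\in \CaH(S)$ places $\mathbf{a}$ in $\mathrm{Ap}(S,\mathbf{b})$. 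For maximality, suppose $\mathbf{a}\le_S \mathbf{a}'$ for some $\mathbf{a}'\in \mathrm{Ap}(S,\mathbf{b})$, and write $\mathbf{a}'=\mathbf{a}+\mathbf{t}=\mathbf{x}+\mathbf{b}+\mathbf{t}$ with $\mathbf{t}\in S$. If $\mathbf{t}\neq 0$, the pseudo-Frobenius property gives $\mathbf{x}+\mathbf{t}\in S$, that is, $\mathbf{a}'-\mathbf{b}\in S$, contradicting $\mathbf{a}'\in \mathrm{Ap}(S,\mathbf{b})$; hence $\mathbf{t}=0$ and $\mathbf{a}'=\mathbf{a}$.

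I do not anticipate a serious obstacle: the only two non-formal steps are the two contradiction arguments, both of which reduce to a one-line manipulation of the relation $(\,\cdot\,)-\mathbf{b}$ combined with the defining property of $\PF(S)$. The potential subtlety of passing from $\N$ to $\N^p$ (where one might worry that the partial order $\le_S$ is genuinely weaker than a total one) does not affect the proof, because maximality with respect to $\le_S$ is exactly what is needed to conclude the contradiction in both directions.
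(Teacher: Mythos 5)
Your proof is correct. Note that the paper itself gives no proof of this proposition---it is imported verbatim by citation from \cite[Proposition 16]{resolucion_maxima}---so there is no internal argument to compare against; your double-inclusion proof is the standard one (the direct generalization of the numerical-semigroup argument in \cite{libro_rosales}), and both contradiction steps are sound. The only step you leave implicit is that in the $\supseteq$ direction one needs $\mathbf{x}+\mathbf{s}\in\CaC$ in order to conclude $\mathbf{x}+\mathbf{s}\in\CaH(S)$ from $\mathbf{x}+\mathbf{s}\notin S$; this is immediate since $\mathbf{x}\in\CaH(S)\subset\CaC$, $\mathbf{s}\in S\subset\CaC$, and $\CaC$ is closed under addition, so it is a cosmetic omission rather than a gap.
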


From this result, we can generalize the corollaries 4.12 and 4.19 in \cite{libro_rosales}.

\begin{corollary}
Let $S$ be a $\CaC$-semigroup and $\mathbf{b} \in S \setminus \{0\}$. The semigroup $S$ is symmetric if and only if $\mathrm{maximals}_{\le_S} \mathrm{Ap}(S,\mathbf{b})=\{F(S)+\mathbf{b}\}$.
\end{corollary}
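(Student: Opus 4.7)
The plan is to apply Proposition \ref{Prop ApPF} directly, combined with the definition of a symmetric $\CaC$-semigroup: namely, $S$ is symmetric exactly when $\PF(S)=\{F(S)\}$. So the task collapses to showing that $\PF(S)=\{F(S)\}$ if and only if $\mathrm{maximals}_{\le_S}\mathrm{Ap}(S,\mathbf{b})=\{F(S)+\mathbf{b}\}$, which is a one-line translation through the set equality furnished by Proposition \ref{Prop ApPF}.

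For the forward implication, I would assume $S$ is symmetric, so $\PF(S)=\{F(S)\}$, and then read Proposition \ref{Prop ApPF} as
\begin{equation*}
\{F(S)\}=\{\mathbf{a}-\mathbf{b}\mid \mathbf{a}\in \mathrm{maximals}_{\le_S}\mathrm{Ap}(S,\mathbf{b})\}.
\end{equation*}
Since the map $\mathbf{a}\mapsto \mathbf{a}-\mathbf{b}$ is a bijection, the set $\mathrm{maximals}_{\le_S}\mathrm{Ap}(S,\mathbf{b})$ must be a singleton whose unique element, after subtracting $\mathbf{b}$, equals $F(S)$; that element is $F(S)+\mathbf{b}$. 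For the converse, if $\mathrm{maximals}_{\le_S}\mathrm{Ap}(S,\mathbf{b})=\{F(S)+\mathbf{b}\}$, then the same proposition yields $\PF(S)=\{F(S)\}$ and hence $S$ is symmetric.

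The only non-automatic point is that $F(S)+\mathbf{b}$ actually lies in $\mathrm{Ap}(S,\mathbf{b})$, but this is immediate: $F(S)+\mathbf{b}\in S$ because $F(S)\in \PF(S)$ and $\mathbf{b}\in S\setminus\{0\}$, and $F(S)\in \CaH(S)$ by the very definition of the Frobenius vector. Thus no serious obstacle arises; the corollary is essentially a dictionary lookup through Proposition \ref{Prop ApPF}.
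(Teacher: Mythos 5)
Your proposal is correct and matches the paper's intended argument: the paper presents this corollary as an immediate consequence of Proposition \ref{Prop ApPF} (stating only that the corollaries ``generalize'' those of the numerical-semigroup setting, with no further proof), and your translation via the injective map $\mathbf{a}\mapsto \mathbf{a}-\mathbf{b}$ is exactly that derivation spelled out. The final remark about $F(S)+\mathbf{b}\in \mathrm{Ap}(S,\mathbf{b})$ is harmless but not needed, since the set equality of Proposition \ref{Prop ApPF} already forces it.
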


\begin{corollary}
Let $S$ be a $\CaC$-semigroup and $\mathbf{b} \in S \setminus \{0\}$. The semigroup $S$ is pseudo-symmetric if and only if $\mathrm{maximals}_{\le_S} \mathrm{Ap}(S,\mathbf{b})=\{F(S)+\mathbf{b},F(S)/2+\mathbf{b}\}$.
\end{corollary}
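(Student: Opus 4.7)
The plan is to deduce this directly from Proposition \ref{Prop ApPF}, in perfect parallel with the previous corollary about the symmetric case. By the definition given in Section \ref{C_irreducible}, $S$ is pseudo-symmetric precisely when $\PF(S) = \{F(S), F(S)/2\}$. Proposition \ref{Prop ApPF} realizes the translation $\mathbf{a}\mapsto\mathbf{a}-\mathbf{b}$ as a bijection between $\mathrm{maximals}_{\le_S}\mathrm{Ap}(S,\mathbf{b})$ and $\PF(S)$; its inverse is the translation $\mathbf{f}\mapsto\mathbf{f}+\mathbf{b}$. So the two-element equality $\PF(S) = \{F(S), F(S)/2\}$ transports under this bijection to $\mathrm{maximals}_{\le_S}\mathrm{Ap}(S,\mathbf{b}) = \{F(S)+\mathbf{b},\ F(S)/2+\mathbf{b}\}$, giving both implications at once.

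Concretely, for the forward direction I would assume $\PF(S) = \{F(S), F(S)/2\}$ and apply the inverse bijection: $F(S) + \mathbf{b}$ and $F(S)/2 + \mathbf{b}$ both lie in $\mathrm{Ap}(S,\mathbf{b})$ because $F(S), F(S)/2 \in \PF(S)$ combined with $\mathbf{b}\in S\setminus\{0\}$ forces $F(S)+\mathbf{b}, F(S)/2+\mathbf{b}\in S$, while $F(S), F(S)/2 \in \CaH(S)$ places them in the Apéry set. Proposition \ref{Prop ApPF} then identifies these exactly as the maximal elements. For the reverse direction, I would apply Proposition \ref{Prop ApPF} to the assumed equality $\mathrm{maximals}_{\le_S}\mathrm{Ap}(S,\mathbf{b}) = \{F(S)+\mathbf{b}, F(S)/2+\mathbf{b}\}$, subtract $\mathbf{b}$ from each element, and read off $\PF(S) = \{F(S), F(S)/2\}$, which is the definition of pseudo-symmetric; the integrality condition $F(S)/2\in\N^p$ needed for the definition is automatic since $F(S)/2 \in \PF(S) \subset \CaH(S) \subset \N^p$.

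There is no substantial obstacle here: the corollary is essentially a one-line consequence of the bijection in Proposition \ref{Prop ApPF}. The only minor points to check are the injectivity of translation by a fixed $\mathbf{b}$ (immediate) and the verification that the two candidate elements $F(S)+\mathbf{b}$ and $F(S)/2+\mathbf{b}$ indeed belong to $\mathrm{Ap}(S,\mathbf{b})$ in the forward direction, both of which follow at once from the definition of $\PF(S)$.
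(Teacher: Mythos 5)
Your proof is correct and takes essentially the same route as the paper, which states this corollary as an immediate consequence of Proposition~\ref{Prop ApPF}: since that proposition says $\PF(S)$ is exactly the translate $\mathrm{maximals}_{\le_S}\mathrm{Ap}(S,\mathbf{b})-\mathbf{b}$, the two-element identity transports back and forth under the bijection $\mathbf{a}\mapsto\mathbf{a}-\mathbf{b}$. Your extra observation that $F(S)/2\in\N^p$ comes for free in the reverse direction (because $F(S)/2\in\PF(S)\subset\CaH(S)\subset\N^p$) is a worthwhile detail the paper leaves implicit.
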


The Frobenius number of a numerical semigroup is the maximum non-negative integer that is not an element of the semigroup. We define the (generalized) Frobenius number of a $\CaC$-semigroup $S$ as $\CaF(S)=\sharp I_S(F(S))+g(S)$. We can easily rewrite the previous propositions \ref{symmetric} and \ref{pseudo-symmetric} from this definition.

\begin{corollary}
Let $S$ be a $\CaC$-semigroup with genus $g$. Then, $S$ is symmetric if and only if $2g=\CaF(S)$.
\end{corollary}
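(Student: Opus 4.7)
The plan is to observe that this corollary is essentially a cosmetic reformulation of Proposition \ref{symmetric} using the definition of the generalized Frobenius number $\CaF(S)$, so the argument amounts to an algebraic manipulation followed by a direct appeal.

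First I would unfold the definition $\CaF(S) = \sharp I_S(F(S)) + g(S)$ and substitute it into the equation $2g = \CaF(S)$. Writing $g$ for $g(S)$, this yields $2g = \sharp I_S(F(S)) + g$, which is equivalent to $g = \sharp I_S(F(S))$. Both transformations are reversible, so $2g = \CaF(S)$ holds if and only if $g = \sharp I_S(F(S))$.

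Second, I would invoke Proposition \ref{symmetric}, which states precisely that a $\CaC$-semigroup $S$ with genus $g$ is symmetric if and only if $g = \sharp I_S(F(S))$. Chaining this equivalence with the algebraic equivalence above closes the proof in both directions.

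There is no genuine obstacle here: the only thing to be careful about is that $g$ denotes $g(S)$ consistently with the statement of Proposition \ref{symmetric}, and that the inequality $g \ge \sharp I_S(F(S))$ from Remark \ref{iff} (already used to establish Proposition \ref{symmetric}) guarantees $\CaF(S)$ is well-defined as a non-negative quantity bounded above by $2g$, so the equality $2g = \CaF(S)$ is the extremal case, matching the symmetric situation.
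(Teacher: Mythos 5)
Your proposal is correct and matches the paper's intent exactly: the paper offers no separate proof, presenting this corollary as an immediate rewriting of Proposition \ref{symmetric} via the definition $\CaF(S)=\sharp I_S(F(S))+g(S)$, which is precisely your substitution argument. Your closing observation that Remark \ref{iff} makes $2g=\CaF(S)$ the extremal case of $\CaF(S)\le 2g$ is a nice (if unnecessary) sanity check.
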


\begin{corollary}
Let $S$ be a $\CaC$-semigroup with genus $g$. Then, $S$ is pseudo-symmetric if and only if $2g=1+\CaF(S)$ and $F(S)/2\in \N^p$.
\end{corollary}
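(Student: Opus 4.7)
The plan is to observe that this corollary is nothing more than a direct algebraic rewrite of Proposition \ref{pseudo-symmetric} using the definition of the generalized Frobenius number $\CaF(S) = \sharp I_S(F(S)) + g(S)$. So the proof will be a one-line substitution followed by an appeal to the earlier proposition.

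First I would start from the equation $2g = 1 + \CaF(S)$ and substitute the definition of $\CaF(S)$, obtaining $2g = 1 + \sharp I_S(F(S)) + g(S)$. Since $g = g(S)$ by hypothesis, this simplifies to $g = 1 + \sharp I_S(F(S))$, which is exactly one of the two conditions characterizing pseudo-symmetric $\CaC$-semigroups in Proposition \ref{pseudo-symmetric}. The second condition, $F(S)/2 \in \N^p$, is already present unchanged in both formulations.

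Therefore the equivalence follows immediately: $S$ is pseudo-symmetric iff (by Proposition \ref{pseudo-symmetric}) $g = 1 + \sharp I_S(F(S))$ and $F(S)/2 \in \N^p$, iff (by the substitution above) $2g = 1 + \CaF(S)$ and $F(S)/2 \in \N^p$. There is no real obstacle here; the only thing to be careful about is to make sure the definition of $\CaF(S)$ is applied correctly and that the two conditions are treated as a conjunction throughout the chain of equivalences.
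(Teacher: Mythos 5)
Your proof is correct and follows exactly the route the paper intends: the corollary is just Proposition \ref{pseudo-symmetric} rewritten via the definition $\CaF(S)=\sharp I_S(F(S))+g(S)$, so the substitution $2g=1+\sharp I_S(F(S))+g \iff g=1+\sharp I_S(F(S))$ is the whole argument. The paper itself offers no separate proof (it states that the propositions can be ``easily rewritten''), and your one-line algebraic verification is precisely that rewriting.
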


These corollaries specialized to numerical semigroups or $\N^p$-semigroups are equivalent to Corollary 4.5 in \cite{libro_rosales}, and the theorems 5.6 and 5.7 in \cite{irreducible_GNS}, respectively.

We illustrate the previous results with one easy example.

\begin{example}
Let $\CaC\subset \N^2$ be the cone with extremal rays $\overrightarrow{(7,3)}$ and $\overrightarrow{(15,1)}$.

The $\CaC$-semigroup $S_1$ minimally generated by
$$
\begin{multlined}
\Lambda_{S_1}=\{
(3, 1), (4, 1), (5, 1), (6, 1), (7, 1), (7, 3), (8, 1), (8, 3), (9, 1), (10, 1),\\ (11, 1), (12, 1), (12, 5), (13, 1), (14, 1), (15, 
   1)
 \}
\end{multlined}
$$ is symmetric, while the $S_2$ minimally generated by
$$
\begin{multlined}
\Lambda_{S_2}=\{
(3, 1), (5, 2), (6, 1), (7, 1), (7, 2), (7, 3), (8, 1), (9, 1), (10,1) \\
 (11, 1), (12, 1), (13, 1), (14, 1), (15, 1)
 \}
\end{multlined}
$$ is pseudo-symmetric.

Note that $\PF(S_1)=\SG(S_1)=\CaH(S_1)=\{(5,2)\}$, but $\CaH(S_2)=\{(4, 1), (5, 1), (8, 2)\}$, $\PF(S_2)=\{(4, 1), (8, 2)\}$, and $\SG(S_2)=\{(8, 2)\}$.

\end{example}

\section{Trees of irreducible $\CaC$-semigroups}\label{sec:tree}

This section describes a tree whose vertex set is the set of all irreducible $\CaC$-semigroups with a fixed Frobenius vector.

Again, consider $\CaC\subset \N^p$ an integer cone and $\mathbf{f}\in \CaC\setminus \{0\}$. Consider a monomial order $\preceq$ on $\N^p$ and decompose the set $I_\CaC(\mathbf{f})$ as $I_\CaC(\mathbf{f})=I_1(\mathbf{f})\sqcup I_2(\mathbf{f})$ with $I_1(\mathbf{f})=\{\mathbf{x}\in I_\CaC(\mathbf{f})\mid \mathbf{0}\neq \mathbf{x}\preceq \mathbf{f}/2\}$ and $I_2(\mathbf{f})=\{\mathbf{x}\in I_\CaC(\mathbf{f})\mid \mathbf{x}\succ \mathbf{f}/2\}$ (when $\mathbf{f}/2\notin \N^p$, consider $\preceq$ as the monomial order extended to $\Q_{\ge}^p$). We define the $\CaC$-semigroup $S(\mathbf{f})$ as $\big( \CaC \setminus\{\mathbf{f}\}\big)\setminus I_1(\mathbf{f})$. This semigroup will be the root of our tree of irreducible $\CaC$-semigroups; this root depends on the fixed monomial order, as the following example shows.

\begin{example}\label{e:S(f)}
Let $\CaC\subset \N^2$ be the integer cone with extremal rays $\overrightarrow{(1,0)}$ and $\overrightarrow{(1,2)}$, and $\mathbf{f}=(4,2)$. Then, $\mathbf{f}/2=(2,1)$ and
$$I_\CaC(\mathbf{f})=\{(1, 0), (1, 1), (1, 2), (2, 0), (2, 1), (2, 2), (3, 0), (3, 1), (3, 2), (4, 2)\}.$$
Let $\prec_1$ and $\prec_2$ be the orders defined by the matrices $\left(\begin{array}{cc}
       1  &  1\\
       1  & 0
    \end{array}
    \right)$ and $\left(\begin{array}{cc}
       1  &  1\\
       0  & 1
    \end{array}
    \right)$, respectively.
In the first case, $I_1(\mathbf{f})_{\prec_1}=\{(1, 0), (1, 1), (1, 2), (2, 0), (2, 1)\}$ and 
\begin{multline*}
  S(\mathbf{f})_{\prec_1}=\langle (3, 0), (4, 0), (5, 0), (3, 1), (4, 1),\\ (5, 1), (2, 2), (3, 2), (2, 3), (3, 3), (4, 3), (2, 4), (3, 4), (3, 5), (3, 6) \rangle.
\end{multline*}
In the other one, $I_1(\mathbf{f})_{\prec_2}=\{(1, 0), (1, 1), (2, 0), (2, 1), (3, 0)\}$ and
\begin{multline*}
S(\mathbf{f})_{\prec_2}=\langle (4, 0), (5, 0), (6, 0), (7, 0), (3, 1),\\ (4, 1), (5, 1), (6, 1), (1, 2), (2, 2), (3, 2), (2, 3), (3, 3) \rangle.
\end{multline*}
\end{example}

The set $S(\mathbf{f})$ satisfies interesting properties collected in the following lemma.

\begin{lemma}
The $\CaC$-semigroup $S(\mathbf{f})$ is irreducible. Moreover, $\mathbf{f}$ is the Frobenius vector of $S(\mathbf{f})$ for any monomial order, and $S(\mathbf{f})$ is the unique irreducible $\CaC$-semigroup satisfying all its gaps belong to $I_1(\mathbf{f})\cup \{\mathbf{f}\}$.
\end{lemma}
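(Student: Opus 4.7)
The plan is to treat the four assertions in order: $S(\mathbf{f})$ is a $\CaC$-semigroup; its Frobenius vector is $\mathbf{f}$ under every monomial order; it is irreducible; and it is the unique such semigroup.

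First, $S(\mathbf{f})=\CaC\setminus(I_1(\mathbf{f})\cup\{\mathbf{f}\})$ has finite complement in $\CaC$ and contains $\mathbf{0}$ (since $\mathbf{0}\notin I_1(\mathbf{f})$). For closure, take $\mathbf{a},\mathbf{b}\in S(\mathbf{f})$ and suppose $\mathbf{a}+\mathbf{b}\in I_1(\mathbf{f})\cup\{\mathbf{f}\}\subseteq I_\CaC(\mathbf{f})$; since $I_\CaC(\mathbf{f})$ is a down-set for $\le_\CaC$ and $\mathbf{a},\mathbf{b}\le_\CaC\mathbf{a}+\mathbf{b}$, both summands lie in $I_\CaC(\mathbf{f})$. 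A brief case split then puts at least one of them into $I_1(\mathbf{f})$ (if $\mathbf{a}+\mathbf{b}\in I_1(\mathbf{f})$, both summands are $\preceq\mathbf{a}+\mathbf{b}\preceq\mathbf{f}/2$; if $\mathbf{a}+\mathbf{b}=\mathbf{f}$, then WLOG $\mathbf{a}\preceq\mathbf{b}$ yields $2\mathbf{a}\preceq\mathbf{f}$ and so $\mathbf{a}\preceq\mathbf{f}/2$), contradicting $\mathbf{a},\mathbf{b}\in S(\mathbf{f})$. For the second claim, every $\mathbf{x}\in I_1(\mathbf{f})$ has $\mathbf{f}-\mathbf{x}\in\CaC\setminus\{\mathbf{0}\}\subseteq\N^p\setminus\{\mathbf{0}\}$, so under any monomial order $\preceq'$ one has $\mathbf{0}\prec'\mathbf{f}-\mathbf{x}$ and hence $\mathbf{x}\prec'\mathbf{f}$; thus $\mathbf{f}=\max_{\preceq'}\CaH(S(\mathbf{f}))$.

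For irreducibility, I would show $\PF(S(\mathbf{f}))\subseteq\{\mathbf{f},\mathbf{f}/2\}$. The Frobenius vector $\mathbf{f}$ is automatically pseudo-Frobenius. If $\mathbf{x}\in\PF(S(\mathbf{f}))\setminus\{\mathbf{f}\}$, then $\mathbf{x}\in I_1(\mathbf{f})$ and $\mathbf{x}\preceq\mathbf{f}/2$. Since $\mathbf{f}$ is a gap and $\mathbf{x}$ is pseudo-Frobenius, the decomposition $\mathbf{f}=\mathbf{x}+(\mathbf{f}-\mathbf{x})$ forces $\mathbf{f}-\mathbf{x}\notin S(\mathbf{f})$; being nonzero, it is a gap, and excluding $\mathbf{f}-\mathbf{x}=\mathbf{f}$ leaves $\mathbf{f}-\mathbf{x}\in I_1(\mathbf{f})$, whence $\mathbf{f}-\mathbf{x}\preceq\mathbf{f}/2$. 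Adding the two inequalities in the compatible extension of $\preceq$ to $\Q_\ge^p$ yields $\mathbf{f}\preceq\mathbf{f}$ with equality, forcing $\mathbf{x}=\mathbf{f}/2\in\N^p$.

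Finally, for uniqueness—interpreted in this section's fixed-Frobenius setting, so $F(T)=\mathbf{f}$ is part of the hypothesis—let $T$ be an irreducible $\CaC$-semigroup with $F(T)=\mathbf{f}$ and $\CaH(T)\subseteq I_1(\mathbf{f})\cup\{\mathbf{f}\}$. Then $T\supseteq S(\mathbf{f})$; if some $\mathbf{y}\in T\setminus S(\mathbf{f})$ existed we would have $\mathbf{y}\in I_1(\mathbf{f})$ (because $\mathbf{f}\notin T$), and the nonzero element $\mathbf{f}-\mathbf{y}$ could not lie in $T$ (else $\mathbf{f}\in T$), so it would belong to $\CaH(T)\subseteq I_1(\mathbf{f})\cup\{\mathbf{f}\}$. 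The same order comparison as above would then force $\mathbf{y}=\mathbf{f}/2\in\N^p$, but then $\mathbf{f}=\mathbf{f}/2+\mathbf{f}/2\in T$, contradicting $\mathbf{f}\notin T$. The main obstacle is handling the monomial-order manipulations cleanly when $\mathbf{f}/2\notin\N^p$; all inequalities remain meaningful in the $\Q_\ge^p$-extension of the $M$-order, and in that case the ``$\mathbf{x}=\mathbf{f}/2$'' branch is automatically unavailable, giving $\PF(S(\mathbf{f}))=\{\mathbf{f}\}$ and hence that $S(\mathbf{f})$ is symmetric.
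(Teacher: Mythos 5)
Your proof is correct, and on the central claim---irreducibility---it takes a genuinely different route from the paper. The paper argues by counting: it pairs each gap $\mathbf{x}$ of $S(\mathbf{f})$ with $\mathbf{f}-\mathbf{x}\in I_{S(\mathbf{f})}(\mathbf{f})$ (via the same order inequality you use, that $\mathbf{x}\preceq\mathbf{f}/2$ and $\mathbf{f}-\mathbf{x}\preceq\mathbf{f}/2$ force $\mathbf{f}=\mathbf{x}+(\mathbf{f}-\mathbf{x})\preceq\mathbf{f}$ with equality), deduces that $\sharp\CaH(S(\mathbf{f}))$ equals $\sharp I_{S(\mathbf{f})}(\mathbf{f})$ or $1+\sharp I_{S(\mathbf{f})}(\mathbf{f})$ according to whether $\mathbf{f}/2\notin\N^p$ or $\mathbf{f}/2\in\N^p$, and then invokes Propositions \ref{symmetric} and \ref{pseudo-symmetric}. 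You instead compute $\PF(S(\mathbf{f}))$ directly and show it is contained in $\{\mathbf{f},\mathbf{f}/2\}$, which yields irreducibility straight from its definition without passing through the genus characterizations; your way is more self-contained, while the paper's reuses the machinery of Section \ref{C_irreducible}, which is partly the point of that section. Elsewhere you fill in what the paper omits: the paper never checks that $S(\mathbf{f})$ is closed under addition (your down-set argument does this), its Frobenius-vector claim rests on the same observation as yours ($\mathbf{f}$ is the $\le_\CaC$-maximum of the gap set, hence the maximum for every monomial order), and for uniqueness it says only that it ``is given by its definition,'' whereas you give an actual argument---one that, notably, never uses irreducibility of $T$, so it proves the stronger statement that $S(\mathbf{f})$ is the unique $\CaC$-semigroup with Frobenius vector $\mathbf{f}$ whose gaps lie in $I_1(\mathbf{f})\cup\{\mathbf{f}\}$. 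Your decision to build $F(T)=\mathbf{f}$ into the hypothesis is not just a convenience: it is genuinely needed. With $\CaC=\N$, the usual order, and $\mathbf{f}=4$, the semigroup $T=\langle 3,4,5\rangle$ has $\PF(T)=\{1,2\}=\{F(T),F(T)/2\}$, so it is irreducible, and its gaps $\{1,2\}$ lie in $I_1(\mathbf{f})\cup\{\mathbf{f}\}=\{1,2,4\}$, yet $T\neq S(\mathbf{f})=\langle 3,5,7\rangle$. So the lemma's literal wording is too strong, and your reading (uniqueness within $\mathfrak{I}(\mathbf{f})$, which is how the lemma is actually used in the tree construction) is the right one.
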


\begin{proof}
By definition of $S(\mathbf{f})$, $\mathbf{f}$ is the unique maximum in $\CaH(S(\mathbf{f}))$ respect to $\le _\CaC$. So, it is also the unique maximum in $\CaH(S(\mathbf{f}))$ respect to $\le _{\N^p}$. This fact implies that $\mathbf{f}$ is the Frobenius vector of $S(\mathbf{f})$ for any monomial order.

Note that the set of gaps of $S(\mathbf{f})$ is the set $\CaH(S(\mathbf{f}))=I_1(\mathbf{f})\cup \{\mathbf{f}\}$, and $I_{S(\mathbf{f})}(\mathbf{f})=I_2(\mathbf{f})\setminus\{\mathbf{f}\}$. Besides, for any $\mathbf{x}\in \CaH(S(\mathbf{f}))$, the element $\mathbf{f}-\mathbf{x}$ belongs to $I_{S(\mathbf{f})}(\mathbf{f})$. In other case, $\mathbf{f} = \mathbf{f} - \mathbf{x} + \mathbf{x} \prec \mathbf{f}/2+\mathbf{f}/2=\mathbf{f}$. Furthermore, since $\mathbf{x}\in I_{S(\mathbf{f})}(\mathbf{f})$ if and only if $\mathbf{f}-\mathbf{x}\in \CaH(S(\mathbf{f}))$, we have that the cardinality of $\CaH(S(\mathbf{f}))$ is equal to $1+\sharp I_{S(\mathbf{f})}(\mathbf{f})$ when $\mathbf{f}\in 2\N^p$, or equal to $\sharp I_{S(\mathbf{f})}(\mathbf{f})$ in the other case. By Proposition \ref{pseudo-symmetric} or Proposition \ref{symmetric} (respectively), $S(\mathbf{f})$ is an irreducible $\CaC$-semigroup.

The uniqueness of $S(\mathbf{f})$ is given by its definition.
\end{proof}

The following proposition gives us an irreducible $\CaC$-semigroups from an existing one, such that both have the same Frobenius vector.

\begin{proposition}\label{son}
Let $S$ be a $\CaC$-semigroup irreducible with Frobenius vector $\mathbf{f}$, and $\mathbf{x}\in I_S(\mathbf{f})$ be one of its minimal generators such that:
\begin{enumerate}
    \item $2\mathbf{x}-\mathbf{f}\notin S$.
    \item $3\mathbf{x}\neq 2\mathbf{f}$.
    \item $4\mathbf{x}\neq 3\mathbf{f}$.
\end{enumerate}
Then, $S'= (S\setminus\{\mathbf{x}\})\cup \{\mathbf{f}-\mathbf{x}\}$ is a $\CaC$-semigroup irreducible with Frobenius vector $\mathbf{f}$.
\end{proposition}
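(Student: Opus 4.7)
The plan is to verify in sequence that $S'\subseteq\CaC$ is a $\CaC$-semigroup, that $\mathbf{f}$ remains its Frobenius vector, and that it is irreducible, reducing the last point to a cardinality check via Propositions \ref{symmetric} and \ref{pseudo-symmetric}. Two preliminary observations organize everything. By Remark \ref{iff}, $\mathbf{x}\in I_S(\mathbf{f})$ gives $\mathbf{f}-\mathbf{x}\in\CaH(S)$, so the swap defining $S'$ yields $\CaH(S')=(\CaH(S)\setminus\{\mathbf{f}-\mathbf{x}\})\cup\{\mathbf{x}\}$ and $I_{S'}(\mathbf{f})=(I_S(\mathbf{f})\setminus\{\mathbf{x}\})\cup\{\mathbf{f}-\mathbf{x}\}$; in particular $g(S')=g(S)$, $\sharp I_{S'}(\mathbf{f})=\sharp I_S(\mathbf{f})$, and $\CaC\setminus S'$ is still finite. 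Moreover, hypothesis (1) forces $\mathbf{x}\neq\mathbf{f}/2$ (otherwise $2\mathbf{x}-\mathbf{f}=0\in S$), and the monomial-order axiom applied to $\mathbf{f}-\mathbf{x}\in\CaC\setminus\{0\}$ gives $\mathbf{x}\prec\mathbf{f}$, so the only new gap lies strictly below $\mathbf{f}$ and $\mathbf{f}$ is the maximal element of $\CaH(S')$.

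The technical core is closure of $S'$ under addition, which I would split into three sub-cases. For $\mathbf{s}_1,\mathbf{s}_2\in S\setminus\{\mathbf{x}\}$, the sum lies in $S$ and cannot equal $\mathbf{x}$ without violating minimality of $\mathbf{x}$ as a generator. For $\mathbf{s}+(\mathbf{f}-\mathbf{x})$ with $\mathbf{s}\in S\setminus\{0,\mathbf{x}\}$, I assume the sum is a gap $\mathbf{y}$ and invoke the fact, extracted from the proofs of Propositions \ref{symmetric} and \ref{pseudo-symmetric}, that in an irreducible $\CaC$-semigroup every gap $\mathbf{y}\neq\mathbf{f}/2$ satisfies $\mathbf{f}-\mathbf{y}\in S$. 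If $\mathbf{y}\neq\mathbf{f}/2$, this yields $\mathbf{x}-\mathbf{s}\in S$, expressing $\mathbf{x}$ as a sum of two nonzero elements of $S$, contradicting minimality; if $\mathbf{y}=\mathbf{f}/2$, then $\mathbf{s}=\mathbf{x}-\mathbf{f}/2\in S$ forces $2\mathbf{x}-\mathbf{f}=2\mathbf{s}\in S$, contradicting (1). Once the sum is in $S$, it cannot equal $\mathbf{x}$, since that would require $\mathbf{s}=2\mathbf{x}-\mathbf{f}\in S$, again against (1). For $2(\mathbf{f}-\mathbf{x})$, the same dichotomy yields either $2\mathbf{x}-\mathbf{f}\in S$ (against (1)) or $2(\mathbf{f}-\mathbf{x})=\mathbf{f}/2$, i.e.\ $4\mathbf{x}=3\mathbf{f}$ (against (3)); finally condition (2) prevents $2(\mathbf{f}-\mathbf{x})$ from collapsing to $\mathbf{x}$.

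Once closure holds, irreducibility of $S'$ is immediate from Propositions \ref{symmetric} and \ref{pseudo-symmetric}: the defining numerical identities $g=\sharp I_S(\mathbf{f})$ or $g=1+\sharp I_S(\mathbf{f})$ (with $\mathbf{f}/2\in\N^p$ in the pseudo-symmetric case) transfer from $S$ to $S'$ because both sides are preserved by the swap and $\mathbf{f}$ is unchanged. The main obstacle will be the gap-dichotomy sub-cases, specifically the possibility $\mathbf{s}+(\mathbf{f}-\mathbf{x})=\mathbf{f}/2$ and the sum $2(\mathbf{f}-\mathbf{x})$: each of the three hypotheses enters once there, with (1) blocking any element of the form $2\mathbf{x}-\mathbf{f}$ or $\mathbf{x}-\mathbf{f}/2$ from being in $S$, (3) preventing $2(\mathbf{f}-\mathbf{x})=\mathbf{f}/2$, and (2) preventing $2(\mathbf{f}-\mathbf{x})=\mathbf{x}$.
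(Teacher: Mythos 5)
Your proposal is correct and follows essentially the same route as the paper's proof: the same swap bookkeeping giving $F(S')=\mathbf{f}$ and preservation of $g$ and $\sharp I(\mathbf{f})$, the same three-case closure check in which hypotheses (1)--(3) block exactly the same degenerate configurations, and the same conclusion via Propositions \ref{symmetric} and \ref{pseudo-symmetric}. The only cosmetic difference is that you package the key step as the fact that every gap $\mathbf{y}\neq\mathbf{f}/2$ of an irreducible $\CaC$-semigroup satisfies $\mathbf{f}-\mathbf{y}\in S$, whereas the paper argues inline that a gap plus a suitable element of $S$ lands in $\PF(S)\subseteq\{\mathbf{f},\mathbf{f}/2\}$ --- these are the same mechanism.
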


\begin{proof}
Note $F(S')= \mathbf{f}$. We prove that $S'$ is closed under addition. Since $\mathbf{x}=(\mathbf{f}-\mathbf{x})+ (2\mathbf{x}-\mathbf{f})$, $2\mathbf{x}-\mathbf{f}$ can not belong to $S$, that is, the second condition is necessary.

Trivially, given two elements in $S\setminus\{\mathbf{x}\}$, their addition belongs to the same set. Besides, $\mathbf{f}-\mathbf{x}+\mathbf{s}\in S\setminus\{\mathbf{x}\}$ for any $\mathbf{s}\in S\setminus\{\mathbf{x}\}$. In other case, $\mathbf{f}-\mathbf{x}+\mathbf{s}= \mathbf{x}$ or $\mathbf{f}-\mathbf{x}+\mathbf{s}\in \CaH (S)$, for some $\mathbf{s}\in S\setminus\{\mathbf{x}\}$. If $\mathbf{f}-\mathbf{x}+\mathbf{s}= \mathbf{x}$, then $\mathbf{s}=2\mathbf{x}-\mathbf{f}\notin S$. 
If $\mathbf{f}-\mathbf{x}+\mathbf{s}\in \CaH (S)$, then there exists $\mathbf{s}'\in S$ such that $\mathbf{f}-\mathbf{x}+\mathbf{s}+\mathbf{s}'\in \PF(S)$. When $\mathbf{f}-\mathbf{x}+\mathbf{s}+\mathbf{s}'=\mathbf{f}/2$, we have $2(\mathbf{s}+\mathbf{s}')=2\mathbf{x}-\mathbf{f}\notin S$, and when $\mathbf{f}-\mathbf{x}+\mathbf{s}+\mathbf{s}'=\mathbf{f}$, $\mathbf{x}=\mathbf{s}+\mathbf{s}'$. Both conclusions are not possible.

To finish this proof, we show that $2(\mathbf{f}-\mathbf{x})\in S\setminus\{\mathbf{x}\}$. Assume that $2(\mathbf{f}-\mathbf{x})\notin S\setminus\{\mathbf{x}\}$, so $2(\mathbf{f}-\mathbf{x})= \mathbf{x}$, or $2(\mathbf{f}-\mathbf{x})+\mathbf{s}\in \PF(S)$ for some $\mathbf{s}\in S$. Since $3\mathbf{x}\neq 2\mathbf{f}$, $2(\mathbf{f}-\mathbf{x})\neq \mathbf{x}$. The semigroup $S$ to be irreducible implies that $2(\mathbf{f}-\mathbf{x})+\mathbf{s}\in \{\mathbf{f},\mathbf{f}/2\}$, but $2(\mathbf{f}-\mathbf{x})+\mathbf{s}$ is not equal to $\mathbf{f}$ because of $2\mathbf{x}-\mathbf{f}\notin S$. Hence, $2(\mathbf{f}-\mathbf{x})+\mathbf{s}=\mathbf{f}/2$. Since $4\mathbf{x}\neq 3\mathbf{f}$, $\mathbf{s}\neq 0$, and from $2\mathbf{x}-\mathbf{f}\notin S$, we obtain $2(\mathbf{f}-\mathbf{x})+\mathbf{s}\neq\mathbf{f}/2$. We conclude $2(\mathbf{f}-\mathbf{x})\in S\setminus\{\mathbf{x}\}$.

Since $\sharp I_S(\mathbf{f})=\sharp I_{S'}(\mathbf{f})$ and $\sharp \CaH (S)= \sharp \CaH (S')$, $S'$ is irreducible by the propositions \ref{symmetric} and \ref{pseudo-symmetric}. 
\end{proof}

From this point forward, we will use the notation $m(S)$ to represent the minimum element (with respect to the partial order $\preceq$) in the minimal generating set of $S$. This element is often referred to as the multiplicity of $S$.

We denote by $\mathfrak{I}(\mathbf{f})$ the set of the irreducible $\CaC$-semigroups with Frobenius vector $\mathbf{f}$. Given $S\in \mathfrak{I}(\mathbf{f})$, consider $S_0=S,$ and $S_n=(S_{n-1}\setminus\{m(S_{n-1})\})\cup \{\mathbf{f}-m(S_{n-1})\}$ when $m(S_{n-1})\in I_1(\mathbf{f})$, or $S_n=S_{n-1}$ in other case, for $n>1$. Note that $S_n=S_{n-1}$ if $S_n=S(\mathbf{f})$. Since $I_1(\mathbf{f})$ is a finite set, the set $\{S_0,S_1,\ldots\}$ is also finite. Let $G=(V,E)$ be the digraph given by the set of vertices $V=\mathfrak{I}(\mathbf{f})$, and edge set $E=\big\{(A,B)\in V\times V\mid m(A)\prec \mathbf{f}/2 \text{ and } B=(A\setminus\{m(A)\})\cup \{\mathbf{f}-m(A)\}\big\}$.

\begin{theorem}
Let $\preceq$ be a monomial order on $\N^p$, $\CaC\subset \N^p$ be an integer cone, and $\mathbf{f}\in \CaC$ be a non zero element. The digraph $G$ is a rooted tree with root $S(\mathbf{f)}$.
\end{theorem}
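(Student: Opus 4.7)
My plan is to prove three statements: (a) every vertex $A\in V$ has out-degree at most one, with the out-edge existing exactly when $m(A)\in I_1(\mathbf{f})$; (b) along any edge $A\to B$, the potential $\mu(A)=\lvert A\cap I_1(\mathbf{f})\rvert$ strictly decreases, so iteration terminates; (c) the unique vertex $A\in V$ with $m(A)\notin I_1(\mathbf{f})$ is $S(\mathbf{f})$. These three facts together yield a unique directed walk from each $A\in V$ to $S(\mathbf{f})$, which is the definition of a rooted tree with that root.

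For (a), uniqueness of the out-edge is immediate from the formula $B=(A\setminus\{m(A)\})\cup\{\mathbf{f}-m(A)\}$. To obtain $B\in V$ when $m(A)\in I_1(\mathbf{f})$, I apply Proposition~\ref{son} with $\mathbf{x}=m(A)$: since $m(A)\prec\mathbf{f}/2$, adding $m(A)$ to both sides gives $2m(A)\prec\mathbf{f}$, and hence $2m(A)-\mathbf{f}\notin\N^p$ (because $\le_{\N^p}$ refines $\preceq$), yielding condition~1; the equalities $3m(A)=2\mathbf{f}$ or $4m(A)=3\mathbf{f}$ would force $m(A)\in\{2\mathbf{f}/3,3\mathbf{f}/4\}$, both $\succeq\mathbf{f}/2$, contradicting the hypothesis and establishing conditions~2 and~3. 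For (b), the operation removes $m(A)\in I_1(\mathbf{f})$ from $A$ and inserts $\mathbf{f}-m(A)\in I_2(\mathbf{f})$, so $\mu(B)=\mu(A)-1$; since $I_1(\mathbf{f})$ is finite, every directed walk starting at $A$ has length at most $\mu(A)$, which rules out cycles and guarantees termination.

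The crux is (c). The easier direction follows from the preceding Lemma characterizing $S(\mathbf{f})$, which gives $\CaH(S(\mathbf{f}))=I_1(\mathbf{f})\cup\{\mathbf{f}\}$ and hence $S(\mathbf{f})\cap I_1(\mathbf{f})=\emptyset$, so $m(S(\mathbf{f}))\notin I_1(\mathbf{f})$. For the converse, given $A\in V$ with $m(A)\notin I_1(\mathbf{f})$, I combine the involution $\mathbf{x}\mapsto\mathbf{f}-\mathbf{x}$ on $I_\CaC(\mathbf{f})$, which by Remark~\ref{iff} pairs $I_A(\mathbf{f})$ with $I_\CaC(\mathbf{f})\cap\CaH(A)$, with the counting identities from Propositions~\ref{symmetric} and~\ref{pseudo-symmetric} to deduce $\CaH(A)\subseteq I_\CaC(\mathbf{f})$ and then $A\cap I_1(\mathbf{f})=\emptyset$; the uniqueness clause of the Lemma then identifies $A$ with $S(\mathbf{f})$. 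The main obstacle is the last deduction: if $h$ were the $\preceq$-minimum of $A\cap I_1(\mathbf{f})$, it would necessarily be a minimal generator of $A$ with $h\succeq m(A)$, and an argument exploiting the pairing structure together with the assumption $m(A)\notin I_1(\mathbf{f})$ is required to derive a contradiction.
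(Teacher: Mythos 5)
Your steps (a) and (b) are correct and coincide with the paper's own argument: the existence of the out-edge when $m(A)\in I_1(\mathbf{f})$ is exactly the paper's application of Proposition~\ref{son} (your derivation of conditions 1--3 from $m(A)\prec \mathbf{f}/2$ is the same observation the paper makes in one line), and termination follows from the finiteness of $I_1(\mathbf{f})$, which the paper records when it defines the sequence $S_0,S_1,\ldots$. The genuine gap is step (c). You correctly isolate it as the crux, but you do not prove it: your closing sentence concedes that an argument ``is required to derive a contradiction'' without supplying one. Be aware that (c) is also precisely the point where the paper's proof is thinnest --- it simply asserts ``if $m(S)\notin I_1(\mathbf{f})$, then $S=S(\mathbf{f})$'' with no justification --- so your proposal reproduces the paper's structure but, like the paper, leaves unproved the only step that actually identifies the root.

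Moreover, the implication you are trying to establish in (c) fails in general, so no such contradiction argument can exist; the troublesome case is $m(A)\prec\mathbf{f}/2$ with $\mathbf{f}-m(A)\notin\CaC$, which your dichotomy (and the paper's) silently excludes. Take $\CaC$ the cone generated by $(1,0)$ and $(1,2)$, the order $\prec_1$ of Example~\ref{e:S(f)}, and $\mathbf{f}=(6,6)$, so $\mathbf{f}/2=(3,3)$ and $I_1(\mathbf{f})=\{(1,0),(1,1),(1,2),(2,0),(2,1),(2,2),(2,3),(2,4),(3,0),(3,1),(3,2),(3,3)\}$. Let $A\subset\CaC$ be the set whose complement in $\CaC$ is $\big(I_1(\mathbf{f})\setminus\{(2,4)\}\big)\cup\{(4,2)\}\cup\{(6,6)\}$, i.e.\ the gap set of $S(\mathbf{f})$ with $(2,4)$ traded for its partner $(4,2)$. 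Every nonzero element of $A$ has coordinate sum at least $4$, so any sum of two of them has coordinate sum at least $8$; the only excluded point with coordinate sum larger than $6$ is $(6,6)$, and $(6,6)$ is not such a sum because every pair $\{\mathbf{z},\mathbf{f}-\mathbf{z}\}$ with $0\neq\mathbf{z}\in I_\CaC(\mathbf{f})$ contains a gap of $A$. Hence $A$ is a $\CaC$-semigroup; its Frobenius vector is $(6,6)$ (the unique gap of coordinate sum greater than $6$), and it is pseudo-symmetric, since $g(A)=13=1+\sharp I_A(\mathbf{f})$ and $\mathbf{f}/2\in\N^2$, or directly $\PF(A)=\{(6,6),(3,3)\}$. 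So $A\in\mathfrak{I}(\mathbf{f})$. However, $m(A)=(4,0)$ and $\mathbf{f}-(4,0)=(2,6)\notin\CaC$, so $m(A)\notin I_1(\mathbf{f})$ even though $A\neq S(\mathbf{f})$: indeed $(2,4)\in A\cap I_1(\mathbf{f})$ (your observation that the $\preceq$-minimum of $A\cap I_1(\mathbf{f})$ is a minimal generator is correct, but here it coexists peacefully with $m(A)\prec\mathbf{f}/2$, $m(A)\notin I_\CaC(\mathbf{f})$, and nothing contradicts this). Consequently $A$ has no outgoing edge in $G$, because $(A\setminus\{(4,0)\})\cup\{(2,6)\}\not\subset\CaC$ is not a $\CaC$-semigroup, so $G$ has a sink other than $S(\mathbf{f})$ and cannot be a tree rooted at $S(\mathbf{f})$. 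Any correct treatment must therefore handle the boundary case $\mathbf{f}-m(A)\notin\CaC$ separately, e.g.\ by restricting to those $\mathbf{f}$ for which every nonzero $\mathbf{x}\in\CaC$ with $\mathbf{x}\prec\mathbf{f}/2$ lies in $I_\CaC(\mathbf{f})$ --- the situation in all of the paper's examples, where the intended argument does go through.
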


\begin{proof}
Let $S$ be an element belonging to $\mathfrak{I}(\mathbf{f})$. If $m(S)\notin I_1(\mathbf{f})$, then $S=S(\mathbf{f})$. Assume that $m(S)\in I_1(\mathbf{f})$. In that case, $m(S)\prec \mathbf{f}/2$, that is, $2m(S)-\mathbf{f}\notin S$, $3m(S)\neq 2\mathbf{f}$, and $4m(S)\neq 3\mathbf{f}$. By Proposition \ref{son}, $S_1=(S\setminus\{m(S)\})\cup \{\mathbf{f}-m(S)\}$ is irreducible. That means $(S,S_1)\in E$. Following this construction, $G$ is a tree whose root is $S(\mathbf{f)}$.
\end{proof}

We obtain an algorithm from previous construction and results to compute a tree of all irreducible $\CaC$-semigroups with a given Frobenius vector and a fixed monomial order (Algorithm \ref{a:treeIrreducible}).

\begin{algorithm}[H]\label{C_semigruop_fixed_Frob}
	\KwIn{A monomial order $\preceq$ on $\N^p$, an integer cone $\CaC$ and $\textbf{f}\in \CaC$.}
	\KwOut{A tree of irreducible $\CaC$-semigroups with Frobenius vector $\textbf{f}$.}

\Begin{
    $X \leftarrow \{S(\textbf{f})\}$\;
    $Y \leftarrow \emptyset$\;
    \While{$X\neq\emptyset$}
    {
        $S \leftarrow \mbox{First}(X)$\;
        $A \leftarrow \{x\in S\mid x\in I_2(\textbf{f})\cap\Lambda_S,2x-\textbf{f}\notin S, 3x\neq\textbf{f}, 4x\neq 3\textbf{f}, \textbf{f}-x\prec m(S)\}$\;
        \If{$A=\emptyset$}
        {
            $Y \leftarrow Y\cup\{S\}$\;
        }
        \Else
        {
            \For{$x\in A$}
            {
                $H \leftarrow (\mathcal{H}(S)\setminus\{\textbf{f}-x\})\cup\{ x \}$\;
                $S' \leftarrow \CaC$-semigroup with $\mathcal{H}(S')=H$\;
                $X\leftarrow X\cup\{S'\}$\;
            }
        }
        $X \leftarrow X\setminus\{S\}$\;
    }

    \Return{$Y$}
}
\caption{Computing a tree of irreducible $\CaC$-semigroups with a given Frobenius vector.}\label{a:treeIrreducible}
\end{algorithm}

The following example shows how to apply Algorithm \ref{a:treeIrreducible} using the semigroups of Example \ref{e:S(f)}.

\begin{example}

Let $S(\textbf{f})_{\prec_1}$ be the semigroup spanned by
$$\{ (3, 0), (4, 0), (5, 0), (3, 1), (4, 1),(5, 1), (2, 2), (3, 2), (2, 3), (3, 3), (4, 3), (2, 4),$$ $$(3, 4), (3, 5), (3, 6)\}.$$
Applying Algorithm \ref{a:treeIrreducible}, we have that  $I_{2_{\prec_1}}(\textbf{f})=\{(2, 2), (3, 0), (3, 1), (3, 2), (4, 2)\}$.
Hence, $S(\textbf{f})_{\prec_1}$ has three children:
\begin{itemize}
    \item $\langle (4, 0), (5, 0), (6, 0),(7, 0), (3, 1), (4, 1), (5, 1),$ $ 
    (6, 1), (1, 2), (2, 2), (3, 2), (2, 3), (3, 3)\rangle$,
    \item $\langle (3, 0), (4, 0),$ $ (5, 0), (1, 1), (3, 2), (2, 3), (2, 4), (3, 6) \rangle$,
    \item $\langle (2, 0), (3, 0), (3, 1), (4, 1), (3, 2),$ $(2, 3),$ $ (3, 3), (2, 4), (3, 4), (3, 5), (4, 5), (3, 6)\rangle$.
\end{itemize}
After repeating this procedure, the tree in Figure \ref{f:treelex} is obtained. Since the definition of $S(\textbf{f}$) depends on the monomial order, we get a new tree if we change it. For example, when we use the order $\prec_2$, Figure \ref{f:treeprec2} appears.
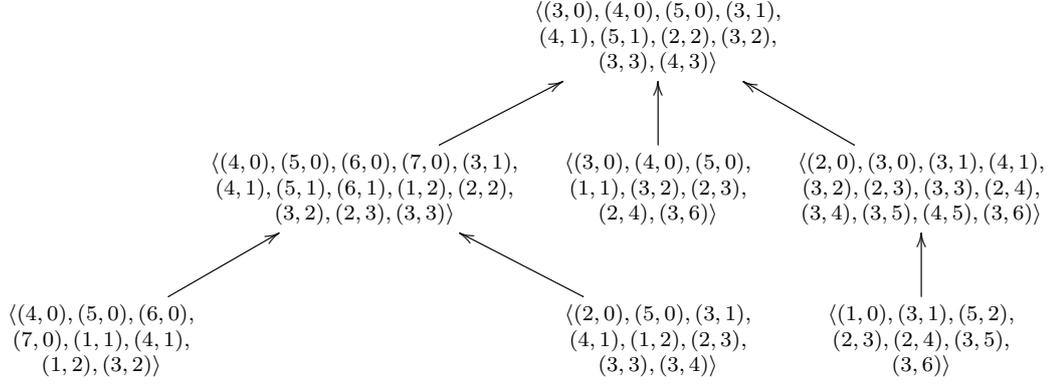
\begin{figure}
    \centering
    \scriptsize
$\xymatrix@C=0pc{  
 & &  \txt{$\langle (3, 0), (4, 0), (5, 0), (3, 1)$, \\ $(4, 1),(5, 1), (2, 2), (3, 2)$,\\ $  (3, 3), (4, 3)\rangle$} &  \\
& \txt{$\langle (4, 0), (5, 0), (6, 0), (7, 0), (3, 1), $ \\ $ (4, 1), (5, 1), (6, 1), (1, 2), (2, 2),$ \\ $ (3, 2), (2, 3), (3, 3) \rangle$}\ar[ru] &  \txt{$\langle (3, 0), (4, 0), (5, 0), $ \\ $ (1, 1), (3, 2), (2, 3), $ \\ $ (2, 4), (3, 6) \rangle$}\ar[u] &  \txt{$\langle (2, 0), (3, 0), (3, 1), (4, 1), $ \\ $ (3, 2), (2, 3), (3, 3), (2, 4), $ \\ $ (3, 4), (3, 5), (4, 5), (3, 6) \rangle$}\ar[lu]\\
\txt{$\langle (4, 0), (5, 0), (6, 0), $ \\ $ (7, 0), (1, 1), (4, 1), $ \\ $ (1, 2), (3, 2) \rangle$}\ar[ru] & & \txt{$\langle (2, 0), (5, 0), (3, 1), $ \\ $ (4, 1), (1, 2), (2, 3), $ \\ $ (3, 3), (3, 4) \rangle$}\ar[lu] &  \txt{$\langle (1, 0), (3, 1), (5, 2), $ \\ $ (2, 3), (2, 4), (3, 5), $ \\ $ (3, 6) \rangle$}\ar[u] \\
}$
    \caption{Tree of irreducible $\CaC$-semigroups with $\prec_1$.}
    \label{f:treelex}
\end{figure}
\begin{figure}
    \centering
    \scriptsize
$\xymatrix@C=0pc{  
& \txt{$\langle (4, 0), (5, 0), (6, 0), (7, 0), (3, 1),$ \\ $ (4, 1), (5, 1), (6, 1), (1, 2), (2, 2), $ \\ $ (3, 2), (2, 3), (3, 3) \rangle$} &\\
 \txt{$\langle(4, 0), (5, 0), (6, 0),$ \\ $ (7, 0), (1, 1), (4, 1),$ \\ $ (1, 2), (3, 2)\rangle$}\ar[ru] & \txt{$\langle(3, 0), (4, 0), (5, 0), (3, 1), (4, 1),$ \\ $ (5, 1), (2, 2), (3, 2), (2, 3), (3, 3),$ \\ $ (4, 3), (2, 4), (3, 4), (3, 5), (3, 6)\rangle$}\ar[u] & \txt{$\langle(2, 0), (5, 0), (3, 1),$ \\ $ (4, 1), (1, 2), (2, 3),$ \\ $ (3, 3), (3, 4)\rangle$} \ar[lu]\\
  \txt{$\langle(3, 0), (4, 0), (5, 0),$ \\ $ (1, 1), (3, 2), (2, 3),$ \\ $ (2, 4), (3, 6)\rangle$}\ar[ru] & & \txt{$\langle(2, 0), (3, 0), (3, 1), (4, 1),$ \\ $ (3, 2), (2, 3), (3, 3), (2, 4),$ \\ $ (3, 4), (3, 5), (4, 5), (3, 6)\rangle$}\ar[lu]\\
  & & \txt{$\langle(1, 0), (3, 1), (5, 2),$ \\ $ (2, 3), (2, 4), (3, 5),$ \\ $ (3, 6)\rangle$}\ar[u]
 }$
    \caption{Tree of irreducible $\CaC$-semigroups with $\prec_2$.}
    \label{f:treeprec2}
\end{figure}
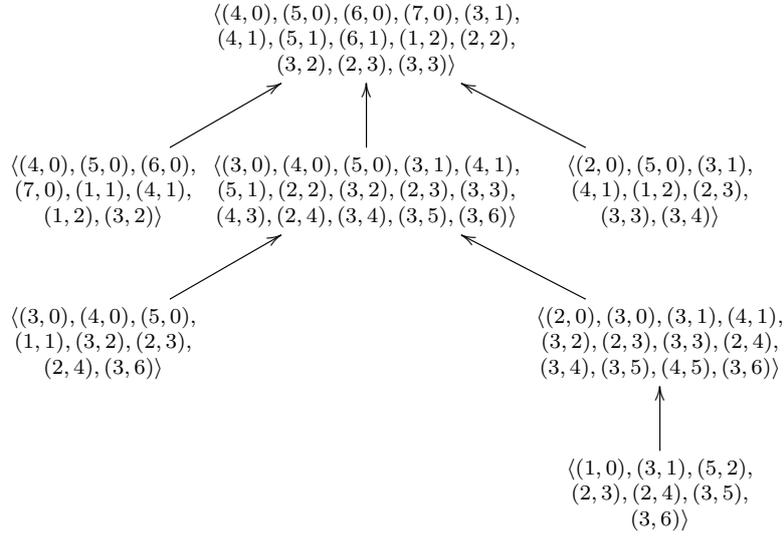
\end{example}

\section{Fundamental gaps of $\CaC$-semigroups}\label{fundamental_gaps}

In this section, we generalize to $\CaC$-semigroups several results related to the fundamental gaps of a numerical semigroup (see \cite[Chapter 4]{libro_rosales}). The first results allow us to check when $\CaC\setminus X$ is a $\CaC$-semigroup for any finite subset $X\subset \CaC$.  Denote by $D(X)$ the set $\{\mathbf{a}\in \CaC \mid n\mathbf{a}\in X \text{ for some }n\in \N \}$.

\begin{proposition}\label{prop:characterizationSminusX}
Let $\CaC\subset \N^p$ be an integer cone and $X$ be a finite subset of $\CaC\setminus\{0\}$. Then, $\CaC\setminus X$ is a $\CaC$-semigroup if and only if $\mathbf{x}-\mathbf{s}\in X$ for every $(\mathbf{x},\mathbf{s})\in (X,\CaC\setminus X)$ with $\mathbf{s}\le _\CaC \mathbf{x}$.
\end{proposition}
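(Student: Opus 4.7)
The plan is to set $S := \CaC\setminus X$ and verify the $\CaC$-semigroup axioms one by one, reducing the entire statement to a question about closure under addition. Since $X\subset \CaC\setminus\{0\}$, the element $0$ lies in $S$; since $X$ is finite, $\CaC\setminus S$ is finite; and the cofiniteness forces the integer cone generated by $S$ to be $\CaC$ (each extremal ray of $\CaC$ contains infinitely many integer points, hence some point of $S$). Therefore, the only nontrivial requirement is that $S$ be closed under addition, and the proposition reduces to showing that this closure is equivalent to the stated condition.

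For the forward implication, I would assume $S$ is a $\CaC$-semigroup and pick any $(\mathbf{x},\mathbf{s})\in X\times S$ with $\mathbf{s}\le_\CaC \mathbf{x}$. By definition $\mathbf{x}-\mathbf{s}\in \CaC$, so it lies either in $S$ or in $X$. If $\mathbf{x}-\mathbf{s}\in S$, then $\mathbf{x}=\mathbf{s}+(\mathbf{x}-\mathbf{s})\in S$ by closure, contradicting $\mathbf{x}\in X$; hence $\mathbf{x}-\mathbf{s}\in X$.

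For the converse, I would assume the condition and prove closure by contradiction. Given $\mathbf{s}_1,\mathbf{s}_2\in S$, one has $\mathbf{s}_1+\mathbf{s}_2\in\CaC$. If $\mathbf{s}_1+\mathbf{s}_2\notin S$, then $\mathbf{x}:=\mathbf{s}_1+\mathbf{s}_2\in X$ and $\mathbf{s}_1\le_\CaC \mathbf{x}$ because $\mathbf{x}-\mathbf{s}_1=\mathbf{s}_2\in \CaC$. Applying the hypothesis to the pair $(\mathbf{x},\mathbf{s}_1)$ yields $\mathbf{s}_2=\mathbf{x}-\mathbf{s}_1\in X$, contradicting $\mathbf{s}_2\in S$.

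The argument is essentially a direct unfolding of the definitions, so there is no real obstacle: the only mild subtlety is making explicit that $\CaC\setminus X$ does generate $\CaC$ as an integer cone, which follows from the finiteness of $X$ together with the standing assumption that $\CaC$ is finitely generated (hence each extremal ray carries infinitely many integer points, all but finitely many of which remain in $S$).
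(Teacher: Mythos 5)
Your proof is correct and takes essentially the same route as the paper: the forward direction is the identical contradiction argument (if $\mathbf{x}-\mathbf{s}\notin X$ then $\mathbf{x}$ would be a sum of two elements of $\CaC\setminus X$), and your converse is just the contrapositive spelled out of what the paper states tersely. Your extra remark verifying that $\CaC\setminus X$ still generates $\CaC$ as a cone is a sound point of detail that the paper leaves implicit, not a different approach.
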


\begin{proof}
Let $S$ be the set $\CaC\setminus X$, and assume that $S$ is a $\CaC$-semigroup. Set $(\mathbf{x},\mathbf{s})\in (X,S)$ with $\mathbf{s}\le _\CaC \mathbf{x}$. Since $\mathbf{s}\le _\CaC \mathbf{x}$, we have that $\mathbf{x}-\mathbf{s}\in \CaC$. If $\mathbf{x}-\mathbf{s}\notin X$, then $\mathbf{x}=\mathbf{s}+\mathbf{s}'$ for some $\mathbf{s}'\in S$, and $S$ is not a semigroup. So, $\mathbf{x}-\mathbf{s}\in X$ for any $(\mathbf{x},\mathbf{s})\in (X,\CaC\setminus X)$ with $\mathbf{s}\le_\CaC \mathbf{x}$.

Conversely, since $\mathbf{x}-\mathbf{s}$ belongs to $X$ for every $(\mathbf{x},\mathbf{s})\in (X,S)$ with $\mathbf{s}\le _\CaC \mathbf{x}$, $S$ is an additive submonoid of $\N^p$ with finite complement in $\CaC$, that is, $S$ is a $\CaC$-semigroup.
\end{proof}

From above proposition, $\CaC\setminus X$ to be a $\CaC$-semigroup implies that $X=D(X)$; for example, if we consider $\CaC$ the cone generated by $\{(1,0), (1,1), (1,2)\}$ and $X=\{(2,0), (2,1)\}$, $\CaC\setminus X$ is not a semigroup because of $D(X)=\{(2,0), (2,1),(1,0)\}$. We now provide an algorithm to determine if $\CaC\setminus X$ is a $\CaC$-semigroup (Algorithm \ref{semig_from_gaps1}).
\begin{algorithm}[h]\label{semig_from_gaps1}
	\KwIn{$\CaC\subset \N^p$ an integer cone, and $X$ a finite subset of $\CaC\setminus\{0\}$.}
	\KwOut{True if $\CaC\setminus X$ is a $\CaC$-semigroup, and False in other case.}

\Begin{
    \If{$X\neq D(X)$} {\Return False}
    \While{$X\neq \emptyset$}
        {$\mathbf{x}\leftarrow \text{First}(X)$\;
        $A\leftarrow\{\mathbf{s}\in \CaC\setminus X\mid \mathbf{s}\le _\CaC \mathbf{x}\}$\;
        $s\leftarrow \text{First}(A)$\;
        \While{$A\neq \emptyset$}
            {
            \If{$\mathbf{x}-\mathbf{s}\notin X$}
                {\Return False}
            {$s\leftarrow \text{First}(A\setminus\{\mathbf{s}\})$}\;
            }
        {$X\leftarrow X\setminus\{\mathbf{x}\}$}\;
        }
    \Return True.   
}
\caption{Checking if $\CaC\setminus X$ is a $\CaC$-semigroup.}
\end{algorithm}

Since, for each $\mathbf{x}\in X$, the set $\{\mathbf{s}\in \CaC\setminus X\mid \mathbf{s}\le _\CaC \mathbf{x}\}$ can be very very large, the condition $\mathbf{x}-\mathbf{s}\notin X$ has to be checked many, many times in Algorithm \ref{semig_from_gaps1}, and many iterations are required for the worst cases. To improve the computational resolution of this problem, we provide an alternative algorithm (Algorithm \ref{semig_from_gaps2}) obtained from the following lemma and \cite[Lemma 3]{Csemigroup}.

\begin{lemma}\label{cadena_ascendente}
Fix a total order $\preceq$ on $\N^p$, and let $X=\{\mathbf{x}_1\preceq \mathbf{x}_2\preceq \cdots \preceq \mathbf{x}_t\}$ be a subset of an integer cone $S_0=\CaC\subset \N^p$. Assume that $S_t=\CaC\setminus X$ is a $\CaC$-semigroup. Then, $S_i=S_{i-1}\setminus \{\mathbf{x}_i\}$ is a $\CaC$-semigroup, and $\mathbf{x}_i$ is a minimal generator of $S_{i-1}$, for every $i\in [t]$.
\end{lemma}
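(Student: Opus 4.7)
My plan is to prove both conclusions simultaneously by a single contradiction argument exploiting the monomial-order property that $\mathbf{a}\in\N^p$ and $\mathbf{b}\in\N^p\setminus\{\mathbf{0}\}$ force $\mathbf{a}\prec \mathbf{a}+\mathbf{b}$ (I read the ``total order'' in the statement as the monomial order $\preceq$ fixed in the preliminaries, since strict additive compatibility is what makes the argument go through). The key observation will be this: if $\mathbf{a}+\mathbf{b}=\mathbf{x}_j\in X$ with $\mathbf{a},\mathbf{b}$ nonzero, then $\mathbf{a},\mathbf{b}\prec\mathbf{x}_j$, and since the $\mathbf{x}_k$'s are listed in strictly increasing order, neither $\mathbf{a}$ nor $\mathbf{b}$ can coincide with any $\mathbf{x}_k$ for $k\ge j$.

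To establish that $S_i$ is a $\CaC$-semigroup, I would first note that $\CaC\setminus S_i=\{\mathbf{x}_1,\dots,\mathbf{x}_i\}$ is finite and that $\mathbf{0}\in S_i$ (since $S_t$ is a $\CaC$-semigroup, $\mathbf{0}\in S_t$, so $\mathbf{0}\notin X$), reducing the problem to closure under addition. Given $\mathbf{a},\mathbf{b}\in S_i$ with $\mathbf{a}+\mathbf{b}=\mathbf{x}_j$ for some $j\le i$, neither summand can vanish (else the other would equal $\mathbf{x}_j\in X\cap S_i=\varnothing$). Applying the key observation, $\mathbf{a},\mathbf{b}$ lie outside $\{\mathbf{x}_j,\dots,\mathbf{x}_t\}$, while $\mathbf{a},\mathbf{b}\in S_i$ places them outside $\{\mathbf{x}_1,\dots,\mathbf{x}_i\}\supseteq\{\mathbf{x}_1,\dots,\mathbf{x}_{j-1}\}$; together, $\mathbf{a},\mathbf{b}\in\CaC\setminus X=S_t$. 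Closure of $S_t$ then forces $\mathbf{x}_j=\mathbf{a}+\mathbf{b}\in S_t$, contradicting $\mathbf{x}_j\in X$.

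The minimality of $\mathbf{x}_i$ in $S_{i-1}$ will follow by the same mechanism: since the $\mathbf{x}_k$ are distinct, $\mathbf{x}_i\in S_{i-1}$, and any decomposition $\mathbf{x}_i=\mathbf{a}+\mathbf{b}$ with $\mathbf{a},\mathbf{b}\in S_{i-1}\setminus\{\mathbf{0}\}$ gives $\mathbf{a},\mathbf{b}\prec\mathbf{x}_i$, which rules out equality with $\mathbf{x}_k$ for $k\ge i$, while $\mathbf{a},\mathbf{b}\in S_{i-1}$ rules out $\{\mathbf{x}_1,\dots,\mathbf{x}_{i-1}\}$; thus again $\mathbf{a},\mathbf{b}\in S_t$ and $\mathbf{x}_i\in S_t$, a contradiction. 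There is no serious obstacle once the monomial-order hypothesis is in hand; the only subtle point is that an induction on $i$ is not actually needed, because the contradictions are extracted directly from closure of the final semigroup $S_t$ rather than from closure of the intermediate $S_{j-1}$'s.
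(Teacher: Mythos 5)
Your proof is correct, but it is organized differently from the paper's. The paper argues by an implicit descending induction along the chain $S_t\subset S_{t-1}\subset\cdots\subset S_0=\CaC$: it observes that $\mathbf{x}_i$ is the Frobenius vector of $S_i$ (the $\preceq$-maximum of its gap set $\{\mathbf{x}_1,\ldots,\mathbf{x}_i\}$) and then invokes the standard fact that adjoining the Frobenius vector to a $\CaC$-semigroup yields again a $\CaC$-semigroup in which the adjoined element is a minimal generator; starting from the hypothesis on $S_t$, each step hands the semigroup property up to the next level. You instead prove closure of every $S_i$ directly, with no induction: a violating decomposition $\mathbf{a}+\mathbf{b}=\mathbf{x}_j$ (with $j\le i$ and $\mathbf{a},\mathbf{b}\neq\mathbf{0}$) forces $\mathbf{a},\mathbf{b}\prec\mathbf{x}_j$, hence $\mathbf{a},\mathbf{b}\notin X$, and closure of $S_t$ alone yields the contradiction; the minimal-generator claim follows by the same mechanism. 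Both arguments ultimately rest on the same order-theoretic fact --- nonzero summands strictly precede their sum --- which is exactly what makes the paper's ``adjoin the top gap'' step legitimate, so the difference is structural rather than conceptual. What the paper's version buys is the link to the Frobenius-vector machinery reused in Section \ref{sec:CsemigroupsGivenFrobenius}; what yours buys is self-containedness and, usefully, an explicit identification of the hypothesis actually needed: with an arbitrary total order the statement is false (take $\CaC=\N$, $X=\{1,2\}$, and a total order in which $2\prec 1$; then $S_2=\N\setminus\{1,2\}$ is a semigroup but $S_1=\N\setminus\{2\}$ is not), and the paper's ``Hence'' silently uses additive compatibility as well, so your reading of $\preceq$ as the monomial order fixed in the preliminaries is the right one. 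One small slip worth fixing: in your closure step the parenthetical justification ``$X\cap S_i=\varnothing$'' is not literally true when $i<t$ (there $X\cap S_i=\{\mathbf{x}_{i+1},\ldots,\mathbf{x}_t\}$); what you need, and what holds, is simply that $\mathbf{x}_j\notin S_i$ because $j\le i$.
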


\begin{proof}
Note that $\mathbf{x}_i$ is the Frobenius vector of $S_{i}$ respect to $\preceq$. Hence, $S_{i-1}=S_i\cup \{\mathbf{x}_i\}$ is a $\CaC$-semigroup and $\mathbf{x}_i$ is a minimal generator of $S_{i-1}$, for every $i\in [t]$.
\end{proof}
\begin{algorithm}[h]\label{semig_from_gaps2}
	\KwIn{A total order $\preceq$ on $\N^p$, $\Lambda_\CaC$ the minimal generating set of the integer cone $\CaC\subset \N^p$, and $X=\{\mathbf{x}_1\preceq \cdots \preceq \mathbf{x}_t\}\subset \CaC\setminus \{0\}$.}
	\KwOut{If $\CaC\setminus X$ is a $\CaC$-semigroup, its minimal generating set, and the empty set in another case.}

\Begin{
    \If{$X\subset \Lambda_\CaC$} {\Return the minimal generating set of $\CaC \setminus X$}
    \If{$X\neq D(X)$}
    {\Return $\{\}$}
    $\Lambda \leftarrow \Lambda_\CaC$\;
    \For{$1\le i\le t$}
        {
        \If{$\mathbf{x_i}\notin \Lambda$}{\Return $\{\}$}
        $\Lambda\leftarrow \text{the minimal generating set of } \langle \Lambda\rangle \setminus \{\mathbf{x}_i\}$\;
        $X\leftarrow X\setminus \{\mathbf{x}_i\}$\;
        \If{$X\subset \Lambda$} {\Return the minimal generating set of $\langle \Lambda\rangle \setminus X$}
        }
}
\caption{Checking if $\CaC\setminus X$ is a $\CaC$-semigroup.}
\end{algorithm}
We illustrate this algorithm with the following example.

\begin{example}
Let $\CaC$ be the cone generated by $\Lambda_{\CaC}=\{(1,0), (1,1), (1,2)\}$ and $X=\{(1,0),(1,1),$ $(1,2),(2,0),(2,1),(2,2),(2,3),(2,4)\}$. 
Since $X\not\subset\Lambda_{\CaC}$ and $X=D(X)$, if we apply Algorithm \ref{semig_from_gaps2}, we obtain that:
\begin{itemize}
    \item $t=0$, $\Lambda = \{(2, 0), (3, 0), (1, 1), (2, 1), (1, 2)\}$,
    \item $t=1$, $\Lambda = \{(2, 0), (3, 0), (2, 1), (3, 1), (1, 2), (2, 2), (2, 3)\}$,
    \item $t=2$, $\Lambda = \{(2, 0), (3, 0), (2, 1), (3, 1), (2, 2), (3, 2), (2, 3), (3, 3), (2, 4),$ $(3, 4), (3, 5), (3, 6)\}$.
\end{itemize}
Therefore,
\begin{multline*}
    \CaC\setminus X = \big\langle (3, 0), (4, 0), (5, 0), (3, 1), (4, 1), (5, 1), (3, 2), (4, 2), (5, 2),(3, 3),\\ (4, 3), (5, 3), (3, 4), (4, 4), (5, 4), (3, 5), (4, 5), (5, 5), (3, 6), (4, 6),\\ (5, 6), (4, 7),(5, 7), (4, 8), (5, 8), (5, 9),  (5, 10) \big\rangle.
\end{multline*}
\end{example}

Fix $S\subset \N^p$ a $\CaC$-semigroup minimally generated by $\Lambda=\{\mathbf{s}_1,\ldots, \mathbf{s}_q ,\mathbf{s}_{q+1},\ldots,\mathbf{s}_t\}$, and consider $\Lambda_\CaC=\{\mathbf{a}_1,\ldots, \mathbf{a}_q ,\mathbf{a}_{q+1},\ldots,\mathbf{a}_m\}$ the minimal generating set of $\CaC$,  with $\mathbf{s}_i,\mathbf{a}_i\in \tau_i$ for $i=1,\ldots , q$ (we assume that the integer cone $\CaC$ has $q$ extremal rays $\{\tau_1,\ldots ,\tau_q\}$).

Note that, the elements $\mathbf{x}$ of $\SG(S)$ are those elements in $\CaH (S)$ such that $S\cup \{\mathbf{x}\}$ is again a $\CaC$-semigroup. These gaps play an important role in decomposing a $\CaC$-semigroups into irreducible $\CaC$-semigroups (\cite{check_C_semigr}).

Similarly to numerical semigroups, given two $\CaC$-semigroups $S$ and $T$ with $S\subsetneq T$, any $\mathbf{x}\in \max_{\le_\CaC} (T\setminus S)$ belongs to $\SG(S)$, that is to say $S\cup \{\mathbf{x}\}$ is a $\CaC$-semigroup. Note that if $\mathbf{x}\in \max_{\le_\CaC} (T\setminus S)$, then $2\mathbf{x}\in S$. From this fact, we can prove the following proposition.

\begin{proposition}
Let $S$ be a $\CaC$-semigroup and $G$ be a subset of $\CaH(S)$. Then, $S\in \max_{\subseteq} \{ T\text{ is a $\CaC$-semigroup}\mid G\subseteq \CaH(T)\}$ if and only if $\SG(S)\subseteq G$.
\end{proposition}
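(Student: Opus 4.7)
The plan is to prove both implications by contrapositive, each one essentially unpacking a single fact about $\SG(S)$: namely, that $\mathbf{x}\in\CaH(S)$ belongs to $\SG(S)$ if and only if $S\cup\{\mathbf{x}\}$ is again a $\CaC$-semigroup (stated immediately before the proposition), together with the observation that whenever $T\supsetneq S$ is a $\CaC$-semigroup, every $\le_\CaC$-maximal element of $T\setminus S$ lies in $\SG(S)$.

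For the forward direction, I would argue contrapositively: suppose $\mathbf{x}\in\SG(S)\setminus G$. Then $T:=S\cup\{\mathbf{x}\}$ is a $\CaC$-semigroup strictly containing $S$, and its gap set is $\CaH(T)=\CaH(S)\setminus\{\mathbf{x}\}$. Because $\mathbf{x}\notin G$ and $G\subseteq\CaH(S)$, we still have $G\subseteq\CaH(T)$, so $S$ is not maximal in $\{T\text{ is a }\CaC\text{-semigroup}\mid G\subseteq\CaH(T)\}$.

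For the backward direction, assume $\SG(S)\subseteq G$ and suppose, for a contradiction, that some $\CaC$-semigroup $T$ with $S\subsetneq T$ satisfies $G\subseteq\CaH(T)$. Since $T\setminus S$ is a nonempty finite subset of $\CaH(S)$, it admits a $\le_\CaC$-maximal element $\mathbf{x}$. To confirm $\mathbf{x}\in\SG(S)$, I would verify the two defining conditions: for any $\mathbf{s}\in S\setminus\{0\}$ the element $\mathbf{x}+\mathbf{s}$ lies in $T$ and strictly dominates $\mathbf{x}$ with respect to $\le_\CaC$, so by maximality it must lie in $S$; hence $\mathbf{x}\in\PF(S)$. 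The same reasoning applied to $2\mathbf{x}=\mathbf{x}+\mathbf{x}\in T$ yields $2\mathbf{x}\in S$. Therefore $\mathbf{x}\in\SG(S)\subseteq G\subseteq\CaH(T)$, contradicting $\mathbf{x}\in T$.

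There is no serious obstacle: the statement really is a direct translation of the two facts that $\SG(S)$ characterises the ``one-step enlargements'' of $S$ and that such enlargements suffice to generate any strictly larger $\CaC$-semigroup. The only point that needs a careful line of argument is justifying that a $\le_\CaC$-maximal element of $T\setminus S$ belongs to $\SG(S)$, and this follows from the closure of $T$ under addition together with the finiteness of $T\setminus S\subseteq\CaH(S)$.
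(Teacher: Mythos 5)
Your proof is correct and follows essentially the same route as the paper: the forward direction uses that $\mathbf{x}\in\SG(S)$ means $S\cup\{\mathbf{x}\}$ is a $\CaC$-semigroup whose gap set still contains $G$, and the backward direction picks a $\le_\CaC$-maximal element of $T\setminus S$ and derives the same contradiction. The only difference is cosmetic: the paper cites the fact that such a maximal element lies in $\SG(S)$ from the remark preceding the proposition, whereas you verify it directly (correctly) via closure of $T$ under addition.
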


\begin{proof}
We know that $\mathbf{x}\in \SG(S)$ if and only if $S\cup \{\mathbf{x}\}$ is a $\CaC$-semigroup. So, if $S\in \max_{\subseteq} \{ T\text{ is a $\CaC$-semigroup}\mid G\subseteq \CaH(T)\}$, then $\mathbf{x}\in G$. In other case, $S\subsetneq S\cup \{\mathbf{x}\}$ and $S$ is not maximal.

Assume that $S$ is not maximal but $\SG(S)\subseteq G$, so there exists $T$ a $\CaC$-semigroup such that $S\subsetneq T$ and $G\subseteq \CaH(T)$. Let $\mathbf{x}\in \max_{\le_\CaC} (T\setminus S)$, thus $\mathbf{x}\in \SG(S)\cap T$, but it is not possible ($\SG(S)\subseteq G\subseteq T$).
\end{proof}

There is another interesting subset related to the set of gaps of $S$. A subset $X$ of $\CaH(S)$ is said to determine $\CaH(S)$ if $S= \max_{\subseteq} \{ T\text{ is a $\CaC$-semigroup}\mid X\subseteq \CaH(T)\}$. These subsets were introduced in \cite{fundamental_gaps_numericos} for numerical semigroups.

\begin{proposition}\label{X_determine}
Let $X$ be a finite subset of an integer cone $\CaC\subset \N^p$. Then, $X$ determines the set of gaps of a $\CaC$-semigroup if and only if $\CaC \setminus D(X)$ is a $\CaC$-semigroup.
\end{proposition}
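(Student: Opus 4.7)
The plan is to prove the two directions separately, building on the auxiliary observation that for any $\CaC$-semigroup $T$, $X\subseteq\CaH(T)$ is equivalent to $D(X)\subseteq\CaH(T)$: if $\mathbf{a}\in T\cap D(X)$ then some multiple $n\mathbf{a}\in X$ also lies in $T$ (by closure of $T$ under addition), contradicting $X\cap T=\emptyset$.

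The backward direction is quick. If $\CaC\setminus D(X)$ is a $\CaC$-semigroup $S$, then $D(X)=\CaH(S)\supseteq X$, so $S$ lies in the family $\mathcal{F}=\{T:T\text{ is a }\CaC\text{-semigroup},\ X\subseteq\CaH(T)\}$. The auxiliary observation forces $T\cap D(X)=\emptyset$ for every $T\in\mathcal{F}$, so $T\subseteq S$; thus $S$ is the unique maximum of $\mathcal{F}$ and $X$ determines $\CaH(S)$.

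For the forward direction, assume $X$ determines $\CaH(S)$. The auxiliary observation gives $S\subseteq\CaC\setminus D(X)$; the goal is to show equality, equivalently that $\CaH(S)\setminus D(X)=\emptyset$. Suppose for contradiction $\mathbf{g}\in\CaH(S)\setminus D(X)$ exists. Since $\CaH(S)$ is finite, $k_0=\max\{k\ge 1:k\mathbf{g}\notin S\}$ is well-defined, and setting $\mathbf{h}=k_0\mathbf{g}$ one has $n\mathbf{h}=nk_0\mathbf{g}\in S$ for every $n\ge 2$ (because $nk_0>k_0$), so $\mathbf{h}\in\FG(S)$; moreover every multiple of $\mathbf{h}$ is a multiple of $\mathbf{g}$, so $\mathbf{h}\notin D(X)$ and in particular $\mathbf{h}\notin X$. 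I would sharpen this by choosing $\mathbf{g}$ so that $\mathbf{h}$ is $\preceq$-maximal in $\CaH(S)\setminus D(X)$. Then consider $T=\langle S\cup\{\mathbf{h}\}\rangle=S\cup\{s+\mathbf{h}:s\in S\}$, a $\CaC$-semigroup properly containing $S$; to contradict the maximality of $S$ in $\mathcal{F}$ it suffices to show $T\cap X=\emptyset$. An element $\mathbf{x}=s+\mathbf{h}\in X$ requires $s\in S\setminus\{0\}$; the $\preceq$-maximality of $\mathbf{h}$ places $\mathbf{h}+s\succ\mathbf{h}$ outside $\CaH(S)\setminus D(X)$, so $\mathbf{h}+s\in S$ or $\mathbf{h}+s\in\CaH(S)\cap D(X)$; in the latter case, pick $n\ge 1$ with $n(\mathbf{h}+s)\in X$, and observe that $n\ge 2$ is impossible because $n\mathbf{h}+ns\in S$ while $n(\mathbf{h}+s)\in X\subseteq\CaH(S)$, forcing $n=1$ and $\mathbf{h}+s\in X$.

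The main obstacle is this residual case $\mathbf{h}+s\in X$ for some $s\in S\setminus\{0\}$. I plan to close it by combining the inclusion $\SG(S)\subseteq X$ (which holds because $X$ determining $S$ makes $S$ a maximal element of $\mathcal{F}$, by the earlier characterisation of such maximal elements) with the preliminary lemma asserting that every gap of $S$ admits a pseudo-Frobenius element above it in $\le_S$. Applied to $\mathbf{x}=\mathbf{h}+s$, the latter produces $\mathbf{f}\in\PF(S)$ with $\mathbf{f}-\mathbf{x}\in S$; then $\mathbf{f}-\mathbf{h}=(\mathbf{f}-\mathbf{x})+s\in S\setminus\{0\}$ and $2\mathbf{f}=2\mathbf{h}+2(\mathbf{f}-\mathbf{h})\in S$, so $\mathbf{f}\in\SG(S)\subseteq X$, yielding a new element $\mathbf{f}\in X$ strictly dominating $\mathbf{h}$ in $\le_S$. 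A well-foundedness argument on the $\preceq$-position of such $\mathbf{f}$, or equivalently a minimality selection of the pair $(\mathbf{h},s)$, should then yield the contradiction that completes the proof.
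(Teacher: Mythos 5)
Your backward direction is correct and is essentially the paper's own argument. The forward direction, however, has a genuine gap exactly where you flag it: the residual case $\mathbf{h}+s\in X$ is never closed, and the well-foundedness/minimality device you propose cannot close it. If you take $s^*$ to be $\preceq$-maximal in $A=\{s\in S\setminus\{0\}\mid \mathbf{h}+s\in X\}$, the pseudo-Frobenius element $\mathbf{f}$ produced from $\mathbf{x}^*=\mathbf{h}+s^*$ satisfies $\mathbf{f}=\mathbf{x}^*+s''$ with $s''\in S$ and $\mathbf{f}=\mathbf{h}+(s^*+s'')\in\SG(S)\subseteq X$, so $s^*+s''\in A$ and maximality forces $s''=0$: the iteration stabilizes at the fixed point $\mathbf{f}=\mathbf{x}^*$, producing no new element, no infinite chain, and no contradiction. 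Worse, no argument assembled only from the facts you use can ever reach a contradiction, because those facts are simultaneously satisfiable. Take $\CaC=\N$, $X=\{5\}$, $S=\langle 2,7\rangle=\{0,2,4,6,7,\ldots\}$: then $\CaH(S)=\{1,3,5\}$, $D(X)=\{1,5\}$, $\mathbf{h}=3$ is the maximum of $\CaH(S)\setminus D(X)$ and a fundamental gap, $\PF(S)=\SG(S)=\{5\}\subseteq X$, and $\mathbf{h}+2=5\in X$ is the residual case, with the iteration fixed at $\mathbf{f}=5$. Every ingredient of your argument holds in this configuration --- note that $\SG(S)\subseteq X$ is, by the paper's earlier proposition, precisely the statement that $S$ is \emph{maximal} in $\mathcal{F}=\{T \text{ a } \CaC\text{-semigroup}\mid X\subseteq\CaH(T)\}$ --- yet nothing is contradictory: $S$ really is maximal in $\mathcal{F}$; it simply is not the maximum, since $\langle 3,4\rangle\in\mathcal{F}$ and $\langle 3,4\rangle\not\subseteq S$.

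The missing idea is that the hypothesis ``$X$ determines $\CaH(S)$'' makes $S$ the \emph{maximum} of $\mathcal{F}$, and in the residual case this can only be exploited through a member of $\mathcal{F}$ that contains $\mathbf{h}$ but does \emph{not} contain $S$: any semigroup containing $S\cup\{\mathbf{h}\}$ also contains $\mathbf{h}+s^*\in X$, so no enlargement of $S$ lies in $\mathcal{F}$ (in the example, $\langle 3,4\rangle$ contains $3$ but not $2$). This is exactly what the paper's proof supplies and your sketch lacks: an auxiliary $\CaC$-semigroup avoiding $X$ built independently of $S$, namely $S'=\{0\}\cup\bigcup_{i=1}^{q}\left(h_i\mathbf{a}_i+\CaC\right)$, where the $\mathbf{a}_i$ lie in the extremal rays and each $h_i$ is minimal with $(h_i\mathbf{a}_i+\CaC)\cap X=\emptyset$; then, for every $\mathbf{a}\in\CaC\setminus D(X)$, the semigroup $S_{\mathbf{a}}=\langle\{\mathbf{a}\}\cup\Lambda_{S'}\rangle$ avoids $X$ (multiples of $\mathbf{a}$ avoid $X$ because $\mathbf{a}\notin D(X)$, and every other element lies in some translated cone $h_i\mathbf{a}_i+\CaC$), so $S_{\mathbf{a}}\subseteq S$ and $\mathbf{a}\in S$, giving $\CaC\setminus D(X)\subseteq S$ at once. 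If you wish to keep your reduction to the element $\mathbf{h}$, the same kind of repair works: $T_0=\{0\}\cup\{\mathbf{c}\in\CaC\mid(\mathbf{c}+\CaC)\cap X=\emptyset\}$ is a $\CaC$-semigroup avoiding $X$ (its complement in $\CaC$ is finite because $\CaC\subseteq\N^p$ is pointed and $X$ is finite), and $\langle\{\mathbf{h}\}\cup T_0\rangle$ belongs to $\mathcal{F}$ while containing $\mathbf{h}\notin S$, which is the desired contradiction. Some such construction is indispensable; well-foundedness alone cannot substitute for it.
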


\begin{proof}
Fix $\Lambda_\CaC=\{\mathbf{a}_1,\ldots, \mathbf{a}_q ,\mathbf{a}_{q+1},\ldots,\mathbf{a}_m\}$ the minimal generating set of $\CaC\subset \N^p$. 

Assume that $X$ determines $\CaH(S)$ for a $\CaC$-semigroup $S$, so $X\subset D(X)\subset \CaH(S)$ and $S\subset \CaC\setminus D(X)$. Let $S'$ be the non-empty set $$\{0\}\cup \bigcup_{i=1}^q\Big\{h_i\mathbf{a}_i+\CaC\mid h_i=\min_{n\in \N}\{(n\mathbf{a}_i+\CaC)\cap X=\emptyset\}\Big\}.$$
Note that $S'$ is a $\CaC$-semigroup. Let $\mathbf{a}$ and $\mathbf{b}$ be two elements in $S'$, so $\mathbf{a}= h_i \mathbf{a}_i + \sum _{k=1}^m \alpha_k \mathbf{a}_k$, and $\mathbf{b}= h_j \mathbf{a}_j + \sum _{k=1}^m \beta_k \mathbf{a}_k$ for some $h_i,h_j,j,i,\alpha_k,\beta_k\in \N$ with $i,j\in [q]$ and $k\in [m]$. Hence, $\mathbf{a} + \mathbf{b} = h_i \mathbf{a}_i + (h_j \mathbf{a}_j + \sum _{k=1}^m (\alpha_k +\beta_k) \mathbf{a}_k)\in h\mathbf{a}_i+\CaC$. Furthermore, $\CaC\setminus S'$ is finite. Get any $\mathbf{a} \in \Lambda_\CaC$, then $\mathbf{a} = \sum _{i=1}^q \alpha_i \mathbf{a}_i$ for some $\alpha_1,\ldots ,\alpha_q\in \Q_\ge$, and hence $k \mathbf{a} = \sum _{i=1}^q \beta_i \mathbf{a}_i$ for some $\beta_1,\ldots ,\beta_q,k\in \N$. We can assume that $\beta_i\ge h_i$. In that case, $\CaC\setminus S'$ is a subset of the finite set $\{\sum _{i=1}^q \gamma _i \mathbf{a}_i\mid 0\le \gamma_i\le \beta_i \}$. We obtain that $S'$ is a finitely generated $\CaC$-semigroup; let $\Lambda _{S'}$ its minimal generating set. The set $X$ is a subset of $\CaH(S')$ by construction.

For every $\mathbf{a}\in \CaC\setminus D(X)$ and we can define $S_\mathbf{a}$ as the semigroup generated by $\{\mathbf{a}\}\cup \Lambda _{S'}$. Since $X\subset \CaH(S_\mathbf{a})$, and $X$ determines $\CaH(S)$, we have that $S_\mathbf{a}\subset S$. Hence, $\CaC\setminus D(X) \subset S$ and then $\CaC \setminus D(X)$ is a $\CaC$-semigroup.

Conversely, any $\CaC$-semigroup $T$ such that $X\subset \CaH(T)$ satisfies that $D(X)\subset \CaH(T)$. Thus, $X$ determines the set of gaps of the $\CaC$-semigroup $\CaC \setminus D(X)$.
\end{proof}

The sets determining the set of gaps of a $\CaC$-semigroup are related to its set of fundamental gaps. 

\begin{lemma}
Let $S$ be a $\CaC$-semigroup and $X$ be a subset of $\CaH(S)$. Then, $X$ determines $\CaH(S)$ if and only if $\FG(S)\subseteq X$. 
\end{lemma}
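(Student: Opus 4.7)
The plan is to combine Proposition \ref{X_determine} with a short reduction that makes the statement equivalent to the equality $D(X)=\CaH(S)$. First, since $X\subseteq \CaH(S)$, one observes that $D(X)\subseteq \CaH(S)$: if some $\mathbf{a}\in D(X)$ lay in $S$, every positive multiple $n\mathbf{a}$ would also lie in $S$, contradicting the existence of an $n\ge 1$ with $n\mathbf{a}\in X\subseteq \CaH(S)$. Using Proposition \ref{X_determine}, the statement ``$X$ determines $\CaH(S)$'' is therefore equivalent to ``$D(X)=\CaH(S)$'', because the unique $\CaC$-semigroup whose gap set is determined by $X$ is $\CaC\setminus D(X)$, and this coincides with $S$ precisely when $D(X)=\CaH(S)$.

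For the forward direction, assume $D(X)=\CaH(S)$ and take any $\mathbf{x}\in\FG(S)$. Since $2\mathbf{x},3\mathbf{x}\in S$ and every integer $n\ge 2$ admits a representation $n=2a+3b$ with $a,b\in\N$, the identity $n\mathbf{x}=a(2\mathbf{x})+b(3\mathbf{x})$ shows that $n\mathbf{x}\in S$ for every $n\ge 2$. As $\mathbf{x}\in\CaH(S)=D(X)$, there exists $n\ge 1$ with $n\mathbf{x}\in X\subseteq \CaH(S)$, and the only compatible value is $n=1$, so $\mathbf{x}\in X$.

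Conversely, suppose $\FG(S)\subseteq X$; by the reduction it suffices to show $\CaH(S)\subseteq D(X)$. Given $\mathbf{x}\in\CaH(S)$, the set $\{n\ge 1\mid n\mathbf{x}\in\CaH(S)\}$ is non-empty (it contains $1$) and finite (as $\CaH(S)$ is finite), so it has a maximum $N$. By the maximality of $N$, both $2N\mathbf{x}$ and $3N\mathbf{x}$ belong to $\CaC\setminus \CaH(S)=S$, whence $N\mathbf{x}\in \FG(S)\subseteq X$, which gives $\mathbf{x}\in D(X)$. Combining this with $D(X)\subseteq \CaH(S)$ yields $D(X)=\CaH(S)$, and Proposition \ref{X_determine} closes the proof. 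The argument rides on Proposition \ref{X_determine}; the only place that requires a small idea is the converse, where picking the largest multiple of $\mathbf{x}$ still in $\CaH(S)$ manufactures a fundamental gap on demand.
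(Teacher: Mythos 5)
Your proposal is correct and takes essentially the same route as the paper: both use Proposition \ref{X_determine} to reduce the statement to the equality $D(X)=\CaH(S)$, prove the forward inclusion by noting that a fundamental gap $\mathbf{x}$ can only have its first multiple in $X$, and handle the converse by taking the largest $N$ with $N\mathbf{x}\in\CaH(S)$, which forces $N\mathbf{x}\in\FG(S)\subseteq X$. Your write-up is merely a bit more explicit than the paper's (e.g., the representation $n=2a+3b$ justifying that only $n=1$ is compatible, and the finiteness argument for the maximum), but no new idea is involved.
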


\begin{proof}
By Proposition \ref{X_determine}, if $X$ determines $\CaH(S)$, then $\CaH(S)=D(X)$. Thus, for all $\mathbf{x}\in \CaH(S)$, $h\mathbf{x}\in X$ for some $h\in \N$. In particular, for every fundamental gap of $S$, the integer $h$ has to be one. Hence, $\mathbf{x}\in X$.

Conversely, since $X\subset \CaH(S)$, we know that $D(X)\subseteq \CaH(S)$. Let $\mathbf{x}\in \CaH(S)$ and consider $h=\max\{k\in \N\mid k\mathbf{x}\in \CaH(S)\}$. In that case, $h\mathbf{x}\in \CaH(S)$, and $2h\mathbf{x},3h\mathbf{x} \in S$. Therefore,  $h\mathbf{x}\in \FG(S)\subseteq X$, $\mathbf{x}\in D(X)$, and $\CaH(S)\subseteq D(X)$.
\end{proof}

Analogously to the case of numerical semigroups,
it happens that $\FG(S)$ is the smallest subset of $\CaH(S)$ determining $\CaH(S)$. Also, the relationship between the special and fundamental gaps of a $\CaC$-semigroup is equivalent to their relationship for numerical semigroups.

\begin{lemma}\label{l:SGandFG}
Let $S$ be a $\CaC$-semigroup. Then, $\SG(S)=\max_{\le _S} \FG(S)$.
\end{lemma}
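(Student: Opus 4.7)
The plan is to prove both inclusions separately, treating $\SG(S) \subseteq \max_{\le_S} \FG(S)$ first and then $\max_{\le_S} \FG(S) \subseteq \SG(S)$, exploiting in both directions the interplay between the defining conditions $\mathbf{x} + (S \setminus \{0\}) \subset S$ (pseudo-Frobenius) and $2\mathbf{x}, 3\mathbf{x} \in S$ (fundamental).

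For $\SG(S) \subseteq \max_{\le_S} \FG(S)$, I would fix $\mathbf{x} \in \SG(S)$, so that $\mathbf{x} \in \PF(S)$ and $2\mathbf{x} \in S$. Writing $3\mathbf{x} = \mathbf{x} + 2\mathbf{x}$ with $2\mathbf{x} \in S \setminus \{0\}$ and using that $\mathbf{x}$ is a pseudo-Frobenius element gives $3\mathbf{x} \in S$, so $\mathbf{x} \in \FG(S)$. For maximality with respect to $\le_S$, suppose $\mathbf{y} \in \FG(S)$ satisfies $\mathbf{x} \le_S \mathbf{y}$; then $\mathbf{y} - \mathbf{x} \in S$, and if $\mathbf{y} - \mathbf{x} \neq 0$ the pseudo-Frobenius property would force $\mathbf{y} = \mathbf{x} + (\mathbf{y} - \mathbf{x}) \in S$, contradicting $\mathbf{y} \in \CaH(S)$; hence $\mathbf{y} = \mathbf{x}$.

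For the reverse inclusion, I would argue by contradiction: take $\mathbf{x} \in \max_{\le_S} \FG(S)$ and assume $\mathbf{x} \notin \SG(S)$. Since already $2\mathbf{x} \in S$ from $\mathbf{x} \in \FG(S)$, the failure must be $\mathbf{x} \notin \PF(S)$, which yields some $\mathbf{s} \in S \setminus \{0\}$ with $\mathbf{g} := \mathbf{x} + \mathbf{s} \notin S$. Because $\mathbf{g} \in \CaC$ (as a sum of two elements of the cone), we have $\mathbf{g} \in \CaH(S)$. The decisive observation is that $2\mathbf{g} = 2\mathbf{x} + 2\mathbf{s}$ and $3\mathbf{g} = 3\mathbf{x} + 3\mathbf{s}$ are both in $S$, since $2\mathbf{x}, 3\mathbf{x} \in S$ (by $\mathbf{x} \in \FG(S)$) and $2\mathbf{s}, 3\mathbf{s} \in S$. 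Therefore $\mathbf{g} \in \FG(S)$, and since $\mathbf{g} - \mathbf{x} = \mathbf{s} \in S \setminus \{0\}$, we have $\mathbf{x} <_S \mathbf{g}$, contradicting maximality.

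I do not expect genuine obstacles here: the argument is essentially the numerical-semigroup proof, and the only subtle point specific to the $\CaC$-semigroup setting is confirming that $\mathbf{x}+\mathbf{s}$ (and its multiples) lies in $\CaC$, which is immediate because $\CaC$ is an integer cone closed under addition and non-negative scalar multiplication. The second inclusion is slightly more delicate than the first, and the key trick worth highlighting is that the fundamental-gap conditions $2\mathbf{x}, 3\mathbf{x} \in S$ on $\mathbf{x}$ automatically upgrade any would-be gap of the form $\mathbf{x}+\mathbf{s}$ into a fundamental gap, which is exactly what produces the contradiction with maximality.
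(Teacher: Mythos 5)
Your proof is correct, and it is in fact more complete than the paper's own argument. For the inclusion $\SG(S)\subseteq \max_{\le_S}\FG(S)$ you argue exactly as the paper does: $\mathbf{x}\in\PF(S)$ together with $2\mathbf{x}\in S\setminus\{0\}$ gives $3\mathbf{x}=\mathbf{x}+2\mathbf{x}\in S$, so $\mathbf{x}\in\FG(S)$, and any $\mathbf{y}\in\FG(S)$ strictly above $\mathbf{x}$ in the order $\le_S$ would be forced into $S$ by the pseudo-Frobenius property, impossible since $\mathbf{y}$ is a gap (you are even slightly more careful than the paper in separating the case $\mathbf{y}=\mathbf{x}$, where the pseudo-Frobenius property does not apply because it only concerns $\mathbf{s}\neq 0$). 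The genuine difference is in the reverse inclusion $\max_{\le_S}\FG(S)\subseteq\SG(S)$: the paper's proof stops after the first inclusion and never addresses it, whereas you supply the needed argument --- if $\mathbf{x}$ is a maximal fundamental gap that is not pseudo-Frobenius, some $\mathbf{s}\in S\setminus\{0\}$ makes $\mathbf{x}+\mathbf{s}$ a gap of $S$ (it lies in $\CaC$ since the cone is closed under addition), and your observation that $2(\mathbf{x}+\mathbf{s})$ and $3(\mathbf{x}+\mathbf{s})$ lie in $S$ (because $2\mathbf{x},3\mathbf{x}\in S$ and $S$ is closed under addition) upgrades $\mathbf{x}+\mathbf{s}$ to a fundamental gap strictly above $\mathbf{x}$ with respect to $\le_S$, contradicting maximality. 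This is exactly the piece required to turn the paper's one-directional argument into a proof of the stated set equality, so your version buys logical completeness at the modest cost of being roughly twice as long.
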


\begin{proof}
Trivially, for any $\mathbf{x}\in \SG(S)$, $2\mathbf{x},3\mathbf{x}\in S$, and then $\SG(S)\subseteq \FG(S)$. Assume that for a $\mathbf{x}\in \SG(S)$, there exists some $\mathbf{y}\in \FG(S)$ with $\mathbf{x}\le _{S} \mathbf{y}$. So, $\mathbf{x}+\mathbf{s}= \mathbf{y}$ for some $\mathbf{s}\in S$. Since $\mathbf{x}$ is a pseudo-Frobenius element of $S$, $\mathbf{y}\in S$. It is not possible, then $\mathbf{x}\in\max_{\le _S} \FG(S)$.
\end{proof}

A $\CaC$-irreducible semigroup can also be characterized using its fundamental gaps using the above lemma.

\begin{corollary}
$S$ is a $\CaC$-irreducible semigroup if and only if the cardinality of $\max_{\le _S} \FG(S)$ is equal to one.
\end{corollary}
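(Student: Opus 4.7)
My plan is to apply Lemma~\ref{l:SGandFG}, which identifies $\SG(S)$ with $\max_{\le_S}\FG(S)$, and so reduce the statement to showing that $S$ is $\CaC$-irreducible if and only if $\sharp\SG(S)=1$.

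The forward direction should be short: $F(S)$ always lies in $\SG(S)$ because $0\prec F(S)$ gives $F(S)\prec 2F(S)$ and hence $2F(S)\in S$; while in the pseudo-symmetric case the only other pseudo-Frobenius element is $F(S)/2$, which fails $2\mathbf{x}\in S$ because $F(S)\notin S$. So in both the symmetric and pseudo-symmetric cases $\SG(S)=\{F(S)\}$.

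For the converse I assume $\SG(S)=\{F(S)\}$, pick any $\mathbf{x}\in\PF(S)\setminus\{F(S)\}$ and aim to prove $\mathbf{x}=F(S)/2$. First I observe that $2\mathbf{x}\notin S$ (otherwise $\mathbf{x}\in\SG(S)\setminus\{F(S)\}$) and that in fact $2\mathbf{x}\in\PF(S)$: for every $\mathbf{s}\in S\setminus\{0\}$ the identity
\[
2\mathbf{x}+\mathbf{s}=\mathbf{x}+(\mathbf{x}+\mathbf{s})
\]
puts the right-hand side in $S$, because $\mathbf{x}+\mathbf{s}\in S\setminus\{0\}$ and $\mathbf{x}\in\PF(S)$. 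Iterating, the sequence $\mathbf{x},2\mathbf{x},4\mathbf{x},\dots$ is strictly $\preceq$-increasing inside the finite set $\PF(S)\subseteq\CaH(S)$, so it must terminate with $2^k\mathbf{x}=F(S)$ for some $k\ge 1$; hence $\mathbf{x}=F(S)/2^k$.

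The main obstacle I anticipate is ruling out $k\ge 2$. To this end I would set $\mathbf{y}=F(S)-\mathbf{x}=(2^k-1)\mathbf{x}\in\CaC$, noting $\mathbf{y}\notin S$ since otherwise $\mathbf{x}+\mathbf{y}=F(S)\in S$. Applying the pseudo-Frobenius property of $2^{k-1}\mathbf{x},2^{k-2}\mathbf{x},\dots,\mathbf{x}$ in succession to a seed $\mathbf{s}\in S\setminus\{0\}$ telescopes to $\mathbf{y}+\mathbf{s}=\sum_{i=0}^{k-1}2^i\mathbf{x}+\mathbf{s}\in S$, whence $\mathbf{y}\in\PF(S)$. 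Finally $2\mathbf{y}=F(S)+(2^k-2)\mathbf{x}\succ F(S)$ (since $(2^k-2)\mathbf{x}$ is a nonzero element of $\N^p$ and hence $\succ 0$), so $2\mathbf{y}\in S$ and $\mathbf{y}\in\SG(S)\setminus\{F(S)\}$, contradicting $\sharp\SG(S)=1$. Therefore $k=1$, giving $\PF(S)\subseteq\{F(S),F(S)/2\}$, i.e.\ $S$ is $\CaC$-irreducible.
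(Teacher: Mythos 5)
Your proof is correct. The paper offers no written argument for this corollary: it is stated as an immediate consequence of Lemma~\ref{l:SGandFG} (namely $\SG(S)=\max_{\le_S}\FG(S)$) together with the background fact---treated as known and never proved anywhere in the paper---that a $\CaC$-semigroup is irreducible exactly when it has a unique special gap. You perform the same reduction via Lemma~\ref{l:SGandFG}, but then you prove that background equivalence from first principles, using only the paper's definition of $\CaC$-irreducible ($\PF(S)=\{F(S)\}$ or $\{F(S),F(S)/2\}$): the forward direction by noting that $F(S)$ always lies in $\SG(S)$ while $F(S)/2$ never does (since $F(S)\notin S$), and the converse by the doubling argument (any $\mathbf{x}\in\PF(S)\setminus\{F(S)\}$ forces $2\mathbf{x},4\mathbf{x},\dots\in\PF(S)$, a strictly $\preceq$-increasing chain in the finite set $\PF(S)$, until $2^k\mathbf{x}=F(S)$) combined with the complement element $\mathbf{y}=F(S)-\mathbf{x}$ to rule out $k\ge 2$. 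I checked the delicate points: $2\mathbf{x}\notin S$ is correctly derived from $\SG(S)=\{F(S)\}$ before $2\mathbf{x}\in\PF(S)$ is claimed, the telescoping sums remain in $S\setminus\{0\}$ at each stage, and $2\mathbf{y}=F(S)+(2^k-2)\mathbf{x}\succ F(S)$ indeed forces $2\mathbf{y}\in S$ because every element of $\CaC$ exceeding $F(S)$ with respect to $\preceq$ belongs to $S$. What your route buys is self-containedness: the corollary becomes provable inside the paper without appeal to outside or implicit results, at the cost of roughly a page of argument the paper compresses to zero. One cosmetic remark: when you assert $F(S)\in\SG(S)$ you justify only $2F(S)\in S$; you should also record that $F(S)\in\PF(S)$, which follows from the same observation that $F(S)+\mathbf{s}\succ F(S)$ for every $\mathbf{s}\in S\setminus\{0\}$, and this membership is also what licenses your opening step of the converse, where $\sharp\SG(S)=1$ is upgraded to $\SG(S)=\{F(S)\}$.
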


The next example illustrates many results appearing in this section.
\begin{example}
    Let $\CaC$ be the cone with extremal rays $\tau_1=\langle (1,0) \rangle$ and $\tau_2=\langle (1,1) \rangle$ and $X=\{ (1, 1), (3, 0), (3, 1), (3, 2), (5, 1), (5, 2) \}$. Since $D(X)=\{(1,0), (1, 1), (3, 0), (3, 1), (3, 2), (5, 1), (5, 2) \}$, we have that
    \begin{multline*}
    \{(x,s)\in(D(X),\CaC\setminus D(X))\mid s\leq_{\CaC} D(X)\}=\\ 
    \{((0,0),(1, 1)), ((0,0),(3, 0)), ((0,0),(3, 1)), ((0,0),(3, 2)),\\
    ((0,0),(5, 1)), ((0,0),(5, 2)),
    ((2,0),(3,0)),
    ((2,0),(3,1)), ((2,0),(5,1)),\\ ((2,0),(5,2)),
     ((2,1),(3,2)), ((2,1),(5,1)), ((2,1),(5,2)),\\
    ((2,2),(3,2)), ((2,2), (5,2)),
    ((4,0),(5,1))\}
    \end{multline*}
   Therefore, by Proposition \ref{prop:characterizationSminusX}, $\CaC\setminus D(X)$ is a $\CaC$-semigroup and, by Proposition \ref{X_determine}, $X$ determines the set of gaps of a $\CaC$-semigroup. If we call this semigroup $S$, we have that $\mathcal{H}(S)=D(X)$. It is not difficult to check that $S=\langle (2, 0), (5, 0), (2, 1), (2, 2), (3, 3) \rangle$ and that, in this case, $\FG(S)=X$. Moreover, we can compute the set of pseudo-Frobenius elements of $S$, and we get $\PF(S)=\{(5,1), (5,2)\}$, so $\SG(S)=\{(5,1), (5,2)\}$. On the other hand, $\FG(S)=\{(1, 1), (3, 0), (3, 1), (3, 2), (5, 1), (5, 2)\}$ and $\max_{\leq_S}\FG(S) = \{(5,1), (5,2)\}$, as we knew by Lemma \ref{l:SGandFG}.
\end{example}

\section{Computing all the $\CaC$-semigroups with a given Frobenius vector}\label{sec:CsemigroupsGivenFrobenius}

Let $\CaC\subset \N^p$ be an integer cone, $\preceq$ be a monomial order on $\N^p$, and $S$ be a $\CaC$-semigroup with Frobenius vector $F(S)\in C\setminus \{0\}$. Note that $F(S)$ is a minimal generator of $S\cup \{F(S)\}$. 
 
Conversely to Lemma \ref{cadena_ascendente}, we can consider the following sequence of $\CaC$-semigroups for some $t\in \N$: $S_t=S$, $S_{i-1}= S_i\cup {F(S_i)}$ for all $i=1,\dots ,t$, and $S_0=\CaC$.
Such a sequence can be constructed for any $\CaC$-semigroup with Frobenius vector $F(S)$. 
So, from a minimal system of generators of $\CaC$, we obtain new $\CaC$-semigroups just by removing a minimal generator $\mathbf{s}$ fufilling that $\mathbf{s}\preceq F$. Performing this process as many times as possible, we obtain all the $\CaC$-semigroups with Frobenius vector $F$. Note that this process is finite due to the finitiness of the set $\{ \mathbf{s} \in \CaC \mid \mathbf{s} \preceq F\}$. This idea allows us to provide an algorithm for computing all the $\CaC$-semigroups with a fixed Frobenius vector (Algorithm \ref{C_semigruop_fixed_Frob}). Moreover, this algorithm can be modified to obtain all the $\CaC$-semigroups with the Frobenius vector less than or equal to a fixed Frobenius vector. For any set of ordered pairs, $A$, $\pi_1(A)$ denotes the set of the first projection of its elements.
\begin{algorithm}[h]\label{C_semigruop_fixed_Frob}
	\KwIn{A total order $\preceq$ on $\N^p$, $\Lambda_\CaC$ the minimal generating set of the integer cone $\CaC\subset \N^p$, and $F\in \CaC\setminus\{0\}$.}
	\KwOut{The set of $\CaC$-semigroups with Frobenius vector equal to $F$.}

\Begin{
    $\Lambda_{\preceq F}\leftarrow\{\mathbf{s}\in \Lambda_\CaC\mid \mathbf{s}\preceq F\}$\;
    $\mathcal T\leftarrow \{(\Lambda_\CaC,\Lambda_{\preceq F})\}$\;
    $\mathcal{G} \leftarrow \emptyset$\;
        \While{$\mathcal T\neq \emptyset$}
        {
        $(\Lambda,\Lambda')\leftarrow \textrm{First} (\mathcal{T})$\;
        $D \leftarrow \Lambda'$\;
        \While{$D\neq \emptyset$}
            {
            $\mathbf{s}\leftarrow \textrm{First} (D)$\;
                $\Lambda'' \leftarrow \{\text{minimal generating set of } \langle \Lambda\rangle \setminus \{\mathbf{s}\}\}$\;
            \If{$\Lambda''\notin \pi_1 (\mathcal T)$}
                {
                \If{$\mathbf{s}=F$}
                    {
                    $\mathcal{G}\leftarrow\mathcal{G}\cup \Lambda''$\;
                    }
                    $\Lambda''_{\preceq F}\leftarrow\{\mathbf{s}\in \Lambda''\mid \mathbf{s}\preceq F\}$\;
                    \If{$\Lambda''_F\neq \emptyset$}
                        {$\mathcal{T} \leftarrow \mathcal{T}\cup \{(\Lambda'',\Lambda''_{\preceq F})\}$\;}
                }        
            $D \leftarrow D \setminus \{\mathbf{s}\}$\;
        }
        $\mathcal{T} \leftarrow \mathcal{T}\setminus \{(\Lambda,\Lambda')\}$\;
        }
    \Return $\mathcal{G}$
}
\caption{Computing $\CaC$-semigroups with a given Frobenius vector.}
\end{algorithm}

\begin{example}
Let $\CaC$ be the cone generated by $\{(1,0),(1,1),(1,2)\}$ and $F=(2,1)$. Then, applying Algorithm \ref{C_semigruop_fixed_Frob}, we get that the set of all $\CaC$-semigroups with Frobenius vector $(2,1)$ is $\{
\{(2, 0), (3, 0), (1, 1), (1, 2)\},
 \{(1, 0), (3, 1), (1, 2),$ $(2, 3)\},
 \{(3, 0), (4, 0),$ $(5, 0), (1, 1), (3, 1), (1, 2), (3, 2)\},
 \{(2, 0), (3, 0), (3, 1),$ $(4, 1),$ $(1, 2), (2, 2),$ $(2, 3),$ $(3, 3)\},$ $
 \{(3, 0),
  (4, 0),
  (5, 0),$ $
  (3, 1),
  (4, 1),
  (5, 1),
  (1, 2),
  (2, 2),$ $
  (3, 2),$ $
  (2, 3),
  (3, 3)\},$ $
 \{(3, 0),
  (4, 0),
  (5, 0),
  (3, 1),
  (4, 1),
  (5, 1),
  (2, 2),
  (3, 2),
  (4, 2),$ $
  (2, 3),
  (3, 3),$ $
  (4, 3),$ $
  (2, 4),$ $
  (3, 4),
  (3, 5),
  (3, 6)\}\}$. These semigroups are shown in Table \ref{tab:LastExample}.
\begin{table}[h]
    \centering
    \begin{tabular}{|c|c|c|}
    \hline
    \includegraphics[width=0.3\textwidth]{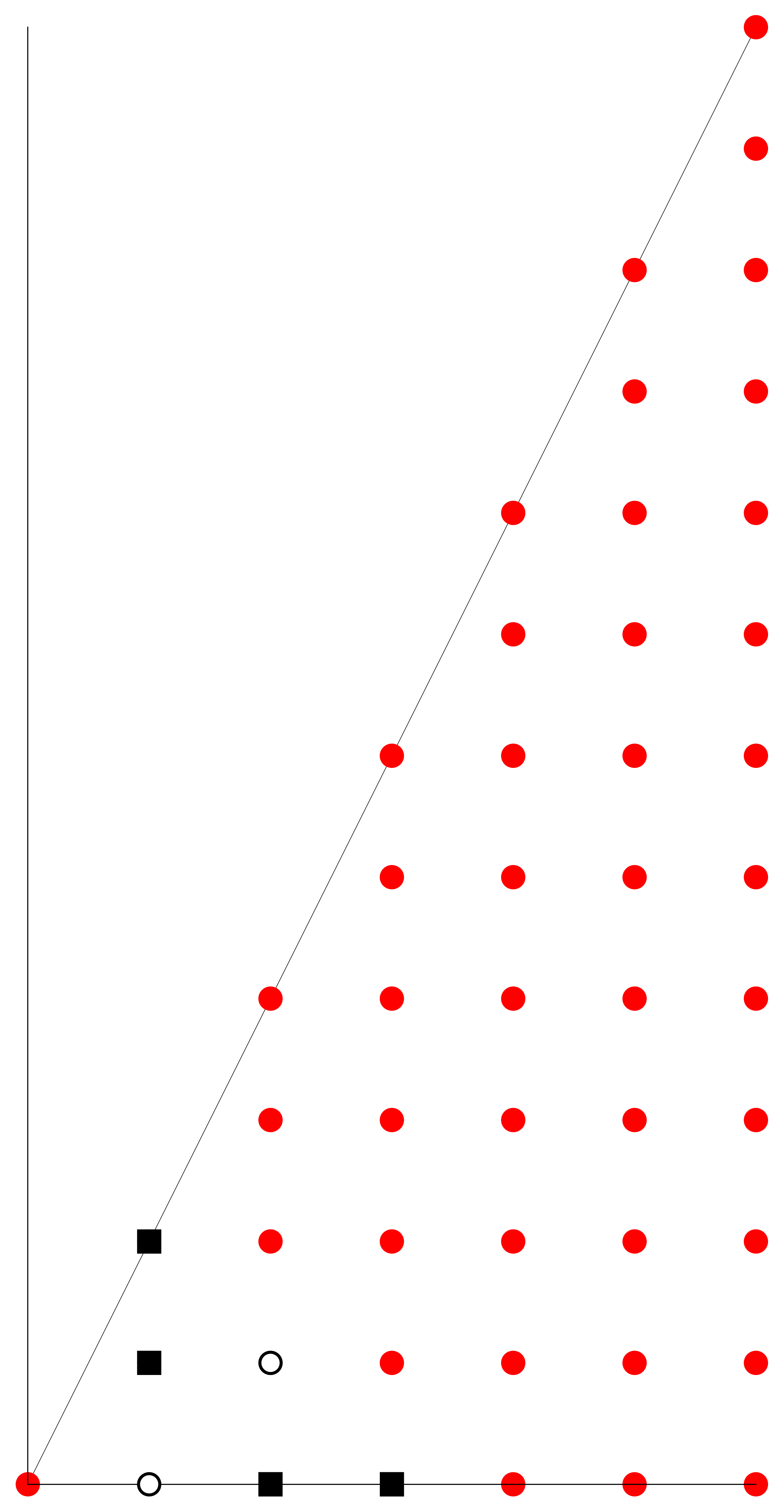} &
    \includegraphics[width=0.3\textwidth]{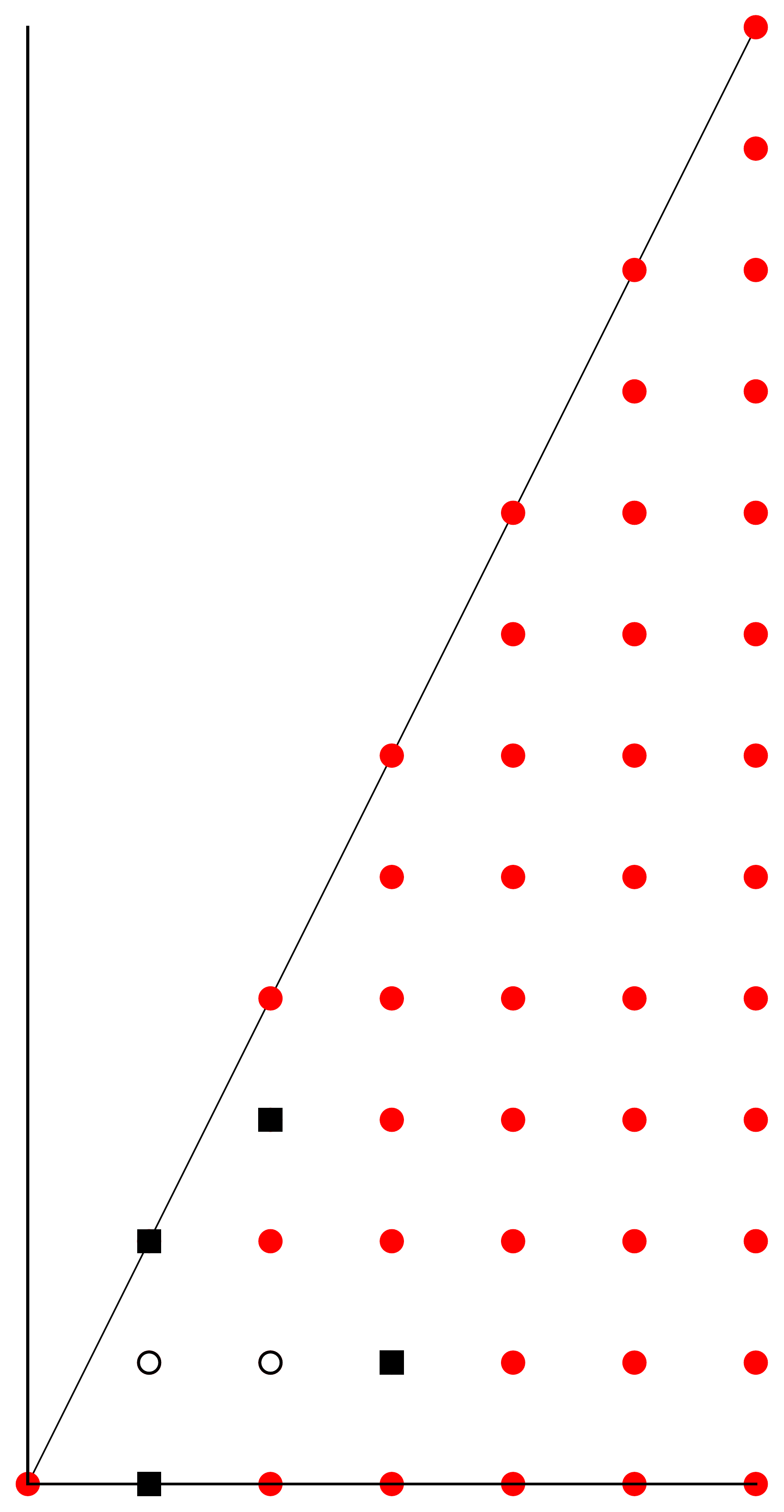} &
    \includegraphics[width=0.3\textwidth]{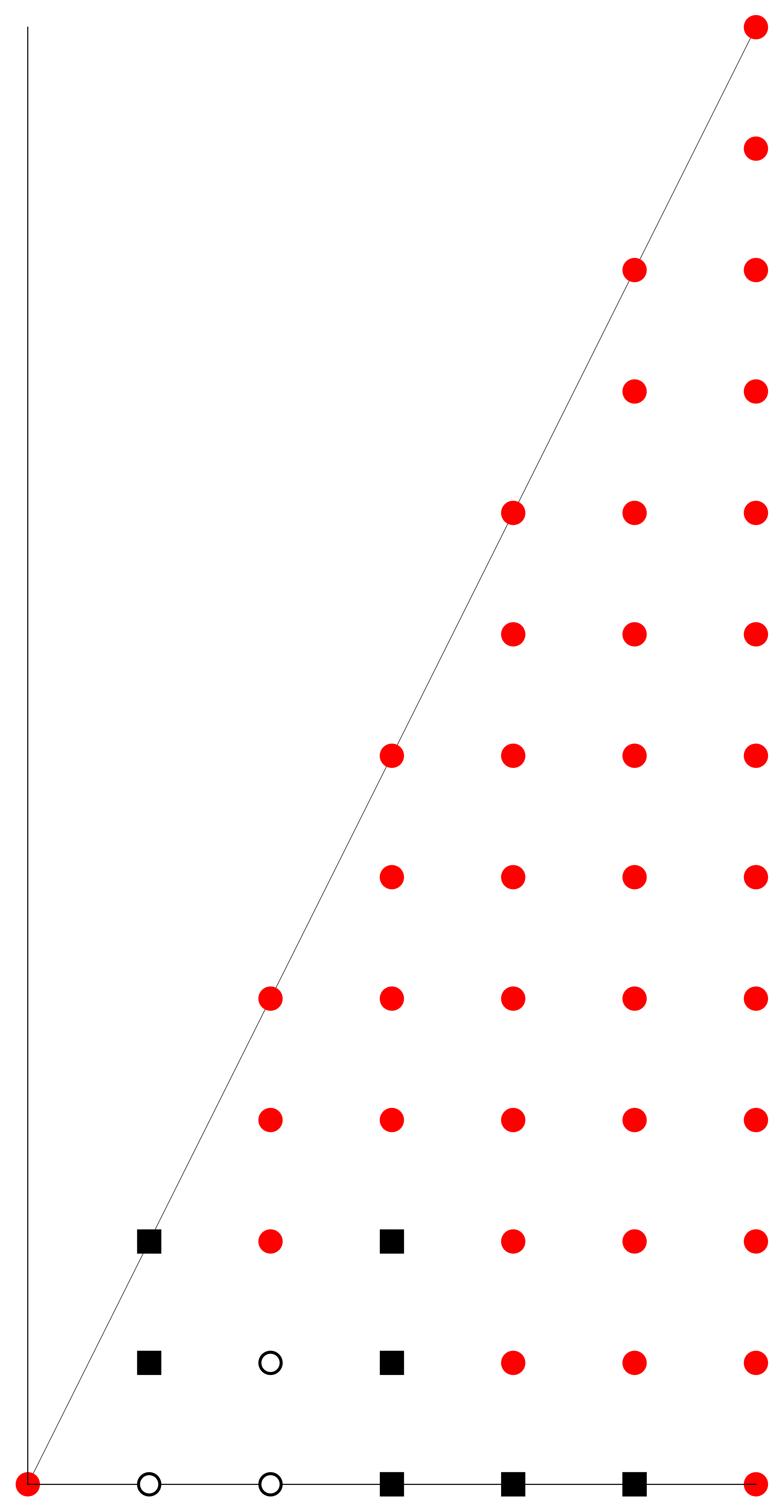}
\\ \hline
        \includegraphics[width=0.3\textwidth]{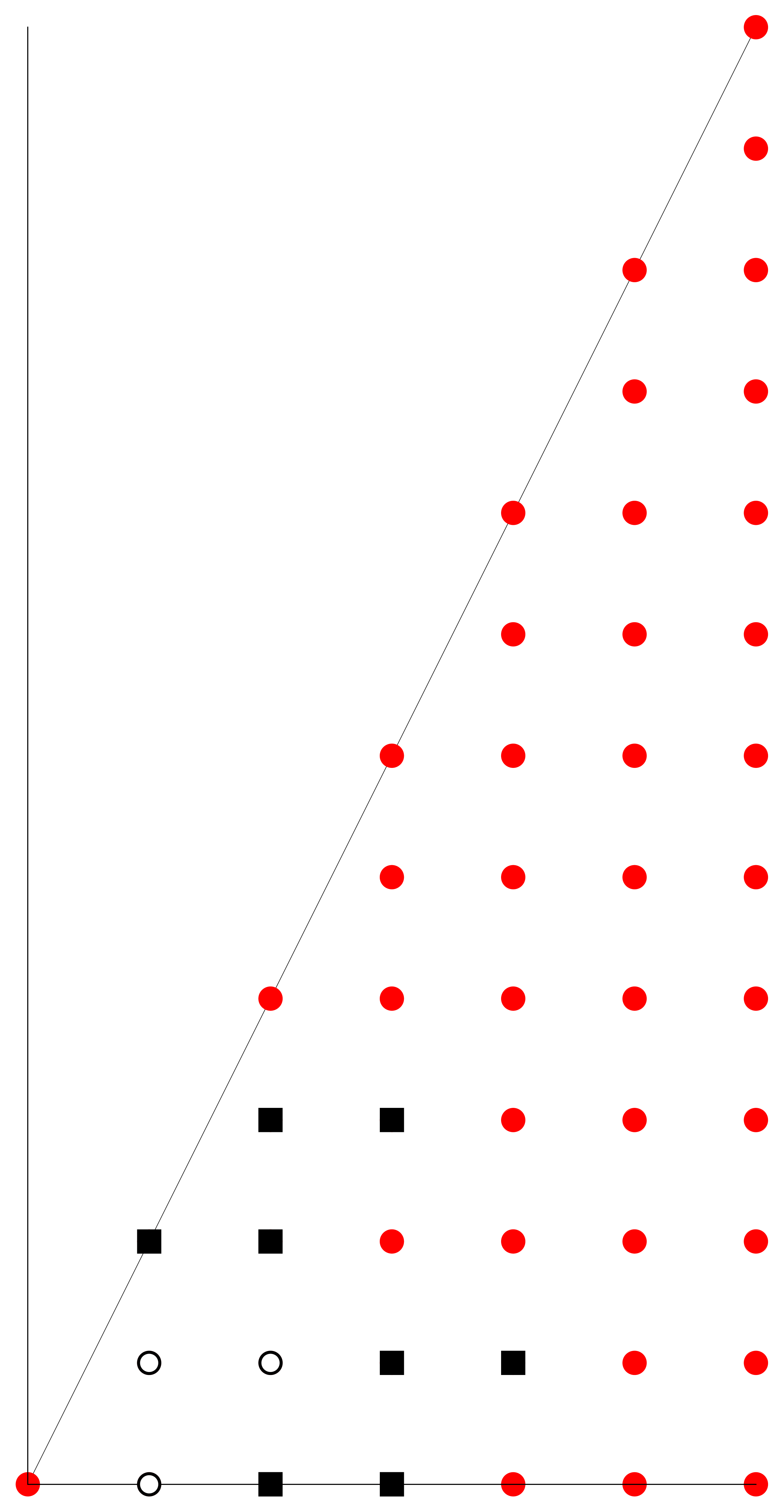} &
    \includegraphics[width=0.3\textwidth]{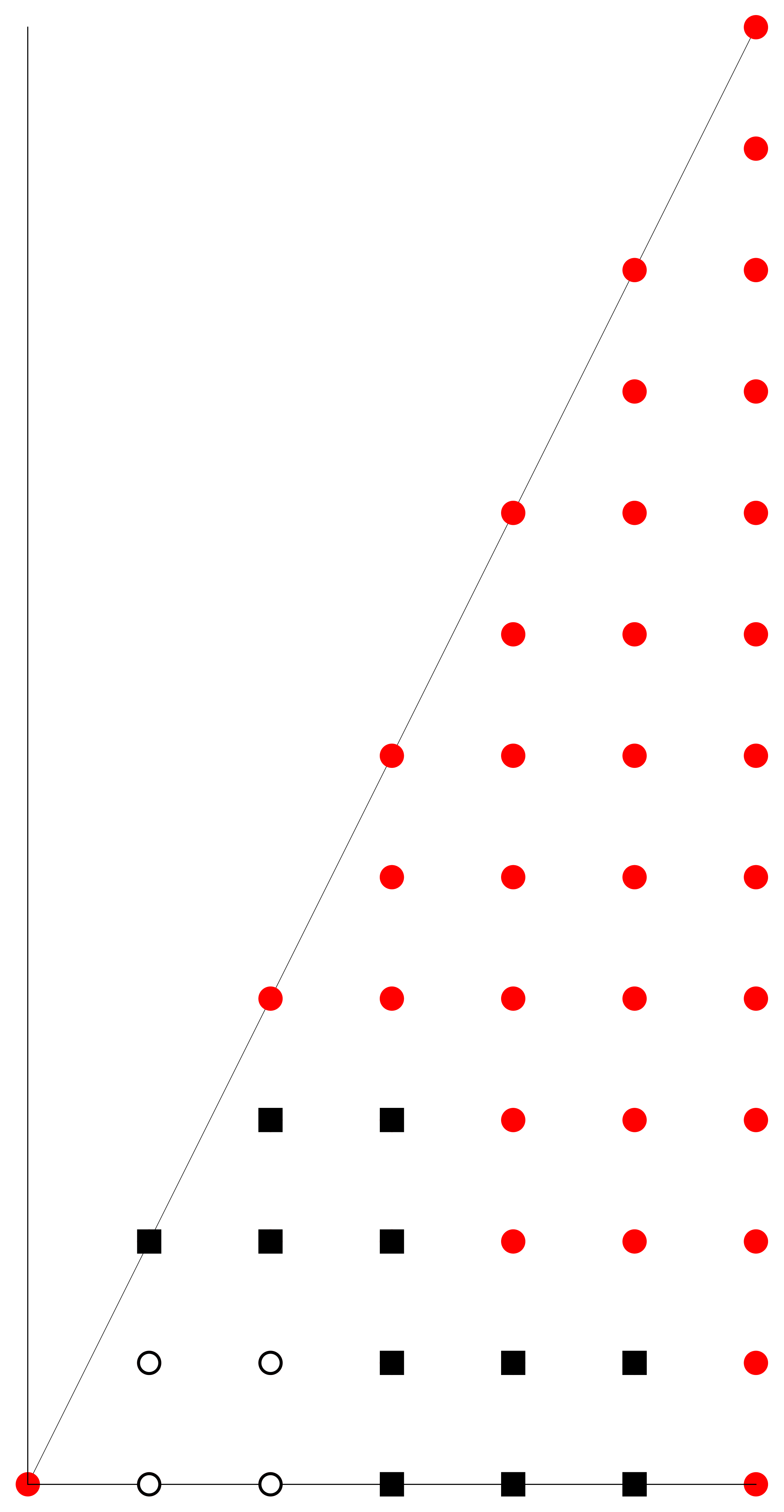} &
    \includegraphics[width=0.3\textwidth]{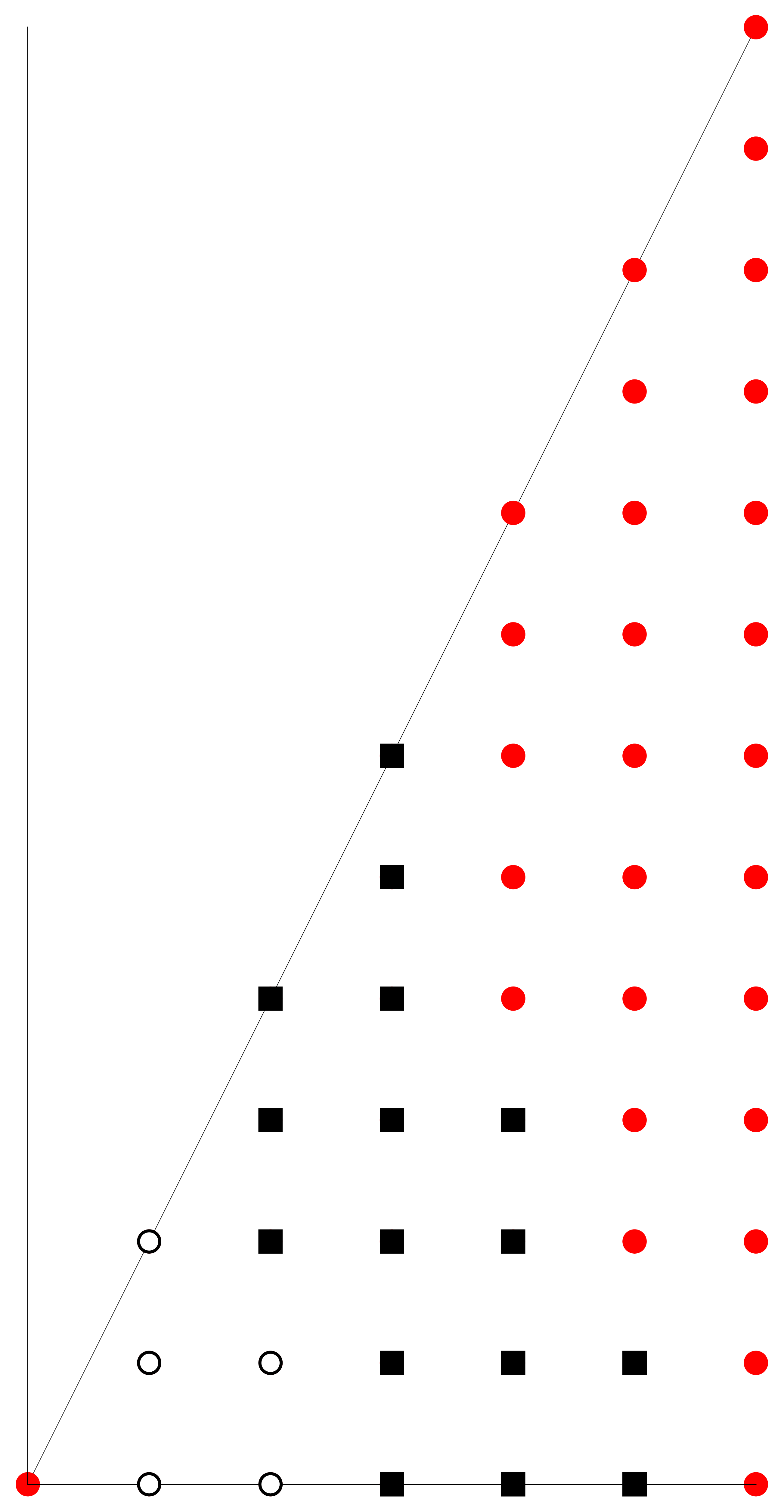}
\\  \hline
    \end{tabular}
        \caption{All $\CaC$-semigroups with $\CaC=\langle (1,0),(1,1),(1,2) \rangle$, and Frobenius vector equal to  $(2,1)$;
        $\circ \equiv  \text{gap}$, $\blacksquare  \equiv \text{minimal generator}$, ${\color{red} \bullet} \equiv \text{element in }S$.
        }
    \label{tab:LastExample}
\end{table}
\end{example}

\subsubsection*{Funding}
The first, second, and last authors were partially supported by Junta de Andaluc\'{\i}a research group FQM-343, and by Consejería de Universidad, Investigación e Innovación de la Junta de Andalucía project ProyExcel\_00868. Proyecto de investigación del Plan Propio – UCA 2022-2023 (PR2022-004) partially supported the second and last authors. Proyecto de investigación del Plan Propio – UCA 2022-2023 (PR2022-011) also partially supported all the authors. This publication and research have been partially granted by INDESS (Research University Institute for Sustainable Social Development), Universidad de Cádiz, Spain.


\subsubsection*{Author information}

J. I. Garc\'{\i}a-Garc\'{\i}a. Departamento de Matem\'aticas/INDESS (Instituto Universitario para el Desarrollo Social Sostenible),
Universidad de C\'adiz, E-11510 Puerto Real  (C\'{a}diz, Spain).
E-mail: ignacio.garcia@uca.es.

D. Marín-Aragón. Departamento de Matem\'aticas,
Universidad de C\'adiz, E-11510 Puerto Real  (C\'{a}diz, Spain).
E-mail: daniel.marin@uca.es.

A. Sánchez-Loureiro. Departamento de Matem\'aticas,
Universidad de C\'adiz, E-11510 Puerto Real  (C\'{a}diz, Spain).
E-mail: a.sanchezlou@alum.uca.es.

A. Vigneron-Tenorio. Departamento de Matem\'aticas/INDESS (Instituto Universitario para el Desarrollo Social Sostenible), Universidad de C\'adiz, E-11406 Jerez de la Frontera (C\'{a}diz, Spain).
E-mail: alberto.vigneron@uca.es.


\begin{thebibliography}{20}

\bibitem{corner}
Bernardini, M.; Tenório, W.; Tizziotti, G.,
\newblock \emph{The corner element of generalized numerical semigroups},
\newblock Results Math. 77 (2022), no. 4, Paper No. 141, 20 pp. 

\bibitem{Bruns}
Bruns, W.; Gubeladze, J.,
\newblock Polytopes, rings, and K-theory,
\newblock {Springer Monographs in Mathematics.} Springer, Dordrecht, 2009.

\bibitem{Wilf_GNS}
Cisto, C.; DiPasquale, M.; Failla G.; Flores, Z.; Peterson, C.; Utano, R.,
\newblock \emph{A generalization of Wilf's conjecture for generalized numerical semigroups},
\newblock Semigroup Forum 101 (2020), no. 2, 303--325.

\bibitem{algoritmos_GNS}
Cisto, C.; Delgado, M.; García-Sánchez, Pedro A.,
\newblock \emph{Algorithms for generalized numerical semigroups},
\newblock J. Algebra Appl. 20 (2021), no. 5, Paper No. 2150079, 24 pp.

\bibitem{irreducible_GNS}
Cisto, C.; Failla G.; Peterson, C.; Utano, R.,
\newblock \emph{Irreducible generalized numerical semigroups and uniqueness of the Frobenius element},
\newblock Semigroup Forum 99 (2019), no. 2, 481--495.

\bibitem{CFU}
Cisto, C.; Failla G.; Utano, R.,
\newblock \emph{On the generators of a generalized numerical semigroup},
\newblock An. St. Univ. Ovidius Constanta 27 (1) (2019), 49--59.

\bibitem{Almost_GNS}
Cisto, C.; Failla G.; Tenório, W.,
\newblock \emph{On almost-symmetry in generalized numerical semigroups},
\newblock Comm. Algebra 49 (2021), no. 6, 2337--2355.

\bibitem{CoxLO}
Cox, D. A.; Little, J.; O'Shea, D.,
\newblock Ideals, varieties, and algorithms. An introduction to computational algebraic geometry and commutative algebra.
\newblock \emph{Undergraduate Texts in Mathematics. Springer, Cham, 2015.}

\bibitem{check_C_semigr}
D\'{\i}az-Ram\'{\i}rez, J. D.; Garc\'{\i}a-Garc\'{\i}a, J. I.; Mar\'{\i}n-Arag\'{o}n, D.; Vigneron-Tenorio, A.,
\newblock \emph{Characterizing affine $\CaC$-semigroups},
\newblock Ricerche di Matematica, 71 (2022), 283--296.

\bibitem{GenSemNp}
Failla G.; Peterson C.; Utano, R.,
\newblock \emph{Algorithms and basic asymptotics for generalized numerical semigroups in $\N^p$},
\newblock  Semigroup Forum (2016) 92, 460--473.

\bibitem{PROGRAMA}
Garc\'{\i}a-Garc\'{\i}a, J. I.; Mar\'{\i}n-Arag\'{o}n, D.; Sánchez-R.-Navarro, A.; Vigneron-Tenorio, A.,
\newblock {CharacterizingAffineCSemigroup, a Python library for computations in $\CaC$-semigroups},
\newblock available at \url{https://github.com/D-marina/CommutativeMonoids}.

\bibitem{Csemigroup}
Garc\'{\i}a-Garc\'{\i}a J. I.; Mar\'{\i}n-Arag\'{o}n D.; Vigneron-Tenorio A.,
\newblock \emph{An extension of Wilf's conjecture to affine semigroups},
\newblock Semigroup Forum (2018), vol. 96, Issue 2, 396--408.

\bibitem{resolucion_maxima}
Garc\'{\i}a-Garc\'{\i}a, J. I.; Ojeda, I.; Rosales, J. C.; Vigneron-Tenorio, A.,
\newblock \emph{On pseudo-Frobenius elements of submonoids of $\N^q$},
\newblock {Collectanea Mathematica (2020), 71, 189--204.}

\bibitem{python}
Python Software Foundation,
\newblock Python Language Reference, version 3.5, available at \url{http://www.python.org}.

\bibitem{libro_rosales}
Rosales, J. C.; Garc\'{\i}a-S\'{a}nchez, P. A.,
\newblock Numerical semigroups,
\newblock \emph{Developments in Mathematics, 20. Springer, New York, 2009.}

\bibitem{fundamental_gaps_numericos}
Rosales, J. C.; Garc\'{\i}a-S\'{a}nchez, P. A.; García-García, J. I.; Jiménez Madrid, J. A.,
\newblock \emph{Fundamental gaps in numerical semigroups with respect to their multiplicity},
\newblock J. Pure Appl. Algebra 189 (2004), no. 1-3, 301--313.

\bibitem{SL}
Singhal, D.; Lin, Y.,
\newblock \emph{Frobenius Allowable Gaps of Generalized Numerical Semigroups},
\newblock Electron. J. Combin. 29 (2022), no. 4, Paper No. 4.12–.

\end{thebibliography}
\end{document}